\newtheorem{theorem}{Theorem}
\theoremstyle{plain}
\newtheorem{lemma}{Lemma}
\newtheorem{proposition}{Proposition}
\numberwithin{equation}{section}
\begin{document}
\title[Singular elliptic equation involving the GJMS operator...]{Singular
elliptic equation involving the GJMS operator on compact Riemannian manifolds%
}
\author{Mohammed Benalili}
\address{Dept. Maths, Faculty of Sciences, University UABB, Tlemcen, Algeria}
\email{[m\_benalili@mail.univ-tlemcen.dz}
\author{Ali Zouaoui}
\email{2014zouaoui@gmail.com}
\subjclass{58J99-83C05}
\keywords{GJMS operator,Critical Sobolev Growth.}

\begin{abstract}
In this paper we consider a singular elliptic equation involving the GJMS
(Graham-Jenne-Mason-Sparling) operator of order $k$ on $n$-dimensional
compact Riemannian manifold with $2k<n$. Mutiplicity and nonexistence
results are established.
\end{abstract}

\maketitle

Let $(M,g)$ be an $n$-dimensional Riemannian manifold. The $k$-th GJMS
operator (Graham-Jenne-Mason-Sparling, see (\cite{5}) $P_{g}$ is a
differential operator defined for any integer $k$ if the dimension n is odd,
and $2k\leq n$ otherwise. In the following, we will consider the case $%
2k\leq n$. $P_{g}$ is of the form

\begin{equation*}
P_{g}=\Delta ^{k}+lot
\end{equation*}%
where $\Delta =-div_{g}\left( \nabla \right) $ is the Laplacian-Beltrami
operator \ and $lot$ denotes the lower terms. One of the fundamental
property of $P_{g}$ is its behavior with respect to conformal change of
metrics: for $\varphi \in C^{\infty }(M)$ , $\varphi >0$ and $\overline{g}%
=\varphi ^{\frac{4}{n-2k}}g$ a conformal metric to $g$,$\ \ \ \ \ \ \ \ \ \
\ \ \ \ \ \ \ \ \ \ \ \ \ \ \ \ \ \ \ \ \ \ \ \ \ \ \ \ \ \ \ \ \ \ \ \ $%
\begin{equation}
\varphi ^{\frac{n+2k}{n-2k}}P_{\widetilde{g}}u=P_{g}\left( \varphi u\right) .
\label{0.0}
\end{equation}%
$P_{g}$ is self-adjoint with respect to the $L^{2}$-scalar . To $P_{g}$ is
associated a conformal invariant scalar function denoted $Q_{g}$ and is
called the $Q$-curvature. For $k=1$, the GJMS operator is ( up to a constant
) the conformal Laplacian and the corresponding $Q$-curvature function is
simply the scalar curvature. For $k=2$, the GJMS operator is the Paneitz
operator introduced in (\cite{13}). For $2k<n$, the Q-curvature is $Q_{g}=%
\frac{2}{n-2k}P_{g}(1)$. Many works was devoted the $Q$-curvature equation
in the last two decades (see \cite{2}, \cite{3}, \cite{4}, \cite{5}, \cite{7}%
, \cite{9}, \cite{13}, \cite{17}). Many authors investigated the
interactions of conformal methods with mathematical physic which led them to
study the Einstein-scalar fields Lichnerowiz equations (see \cite{6}, \cite%
{8}, \cite{12}, \cite{14}, \cite{15}, \cite{16}). These methods have been
extended to scalar fields Einstein-Licherowicz type equation involving the
Paneitz operator, (see \cite{9}). In this work we analyze an
Einstein-Lichnerowicz scalar field equation containing the $k$-th order GJMS
operator on a Riemannian $n$-dimensional manifold with $2k<n$; more
precisely we consider the following equation

\begin{equation}
\left\{ 
\begin{array}{c}
P_{g}(u)=B\left( x\right) u^{2^{\sharp }-1}+\frac{A\left( x\right) }{%
u^{2^{\sharp }+1}}+\frac{C(x)}{u^{p}} \\ 
u>0%
\end{array}%
\right.  \label{0.1}
\end{equation}%
where $2^{\sharp }=\frac{2n}{n-2k}$ and $p>1$. In all the sequel of this
paper we assume that the operator $P_{g}$ is coercive which allows us ( see
Proposition 2, \cite{17}) to endow $H_{k}^{2}\left( M\right) $ with the
following appropriated equivalent norm 
\begin{equation}
\Vert u\Vert =\sqrt{\int\limits_{M}u.P_{g}(u)dv_{g}}\text{.}  \label{0.2}
\end{equation}%
So we deduce from the coercivity of $P_{g}$ and the continuity of the
inclusion $H_{k}^{2}\left( M\right) \subset L^{2^{\sharp }}\left( M\right) $%
, the existence of a constant $S>0$ such that 
\begin{equation}
\Vert u\Vert _{2^{\sharp }}^{2^{\sharp }}\leq S\Vert u\Vert ^{2^{\sharp }}
\label{0.3}
\end{equation}%
where $2^{\sharp }=\frac{2n}{n-2k}$.

Our work is organized as follows: in a first section we show the existence
of a solution to equation (\ref{0.1}) obtained by means of the mountain-pass
theorem: more precisely we establish the following theorem.

\begin{theorem}
\label{theorem1} Let $\left( M,g\right) $ be a compact Riemannian manifold
with dimension $n>2k$ and $A>0,$ $B>0$, $C>0$ are smooth functions on $M$.
Suppose moreover that the operator\ $P_{g}$ is coercive and have a positive
Green function. If there exists a constant $C\left( n,p,k\right) >0$
depending only on $n,$ $p$ , $k$ such that 
\begin{equation}
\frac{\Vert \varphi \Vert ^{2^{\sharp }}}{2^{\sharp }}\int_{M}\frac{A(x)}{%
\varphi ^{2^{\natural }}}dv_{g}\leq C\left( n,p,k\right) \left( S\underset{%
x\in M}{\max }B(x)\right) ^{\frac{2+2^{\sharp }}{2-2^{\sharp }}}  \label{0.4}
\end{equation}%
\begin{equation}
\frac{\Vert \varphi \Vert ^{p-1}}{p-1}\int_{M}\frac{C(x)}{\varphi ^{p-1}}%
dv_{g}\leq C\left( n,p,k\right) \left( S\underset{x\in M}{\max }B(x)\right)
^{\frac{p+1}{2-2^{\sharp }}}  \label{0.5}
\end{equation}%
for some smooth function $\varphi >0$, then equation (\ref{0.1}) admits a
smooth solution.
\end{theorem}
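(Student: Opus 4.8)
The plan is to realise the solutions of (\ref{0.1}) as critical points of the energy functional
\[
I(u)=\frac12\|u\|^{2}-\frac{1}{2^{\sharp}}\int_{M}B\,u^{2^{\sharp}}\,dv_{g}+\frac{1}{2^{\sharp}}\int_{M}\frac{A}{u^{2^{\sharp}}}\,dv_{g}+\frac{1}{p-1}\int_{M}\frac{C}{u^{p-1}}\,dv_{g},
\]
defined on the positive cone of $H^{2}_{k}(M)$, i.e. on those $u>0$ for which the singular integrals are finite. Where $u$ is bounded away from $0$ the functional is of class $C^{1}$, and a direct computation of $I'(u)=0$ recovers exactly equation (\ref{0.1}); since $A,B,C>0$ the two singular terms are positive and blow up as $u\to0$, so any finite-energy critical point is automatically kept away from the singular set. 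The first task is to record these facts and fix the admissible class on which $I$ is well behaved.

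Next I would verify the mountain-pass geometry, and here the hypotheses (\ref{0.4})--(\ref{0.5}) enter. Dropping the positive singular terms and using (\ref{0.3}) gives $I(u)\ge \tfrac12\|u\|^{2}-\tfrac{S\max_{M}B}{2^{\sharp}}\|u\|^{2^{\sharp}}$, whose right-hand side is a one-variable function of $\|u\|$ attaining a positive maximum $\alpha=\big(\tfrac12-\tfrac{1}{2^{\sharp}}\big)t_{*}^{2}=\tfrac{k}{n}t_{*}^{2}$ on the sphere $\|u\|=t_{*}:=(S\max_{M}B)^{-1/(2^{\sharp}-2)}$; thus $I\ge\alpha$ there, which is the barrier. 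Along the ray $t\mapsto t\varphi$ one has $I(t\varphi)\to+\infty$ as $t\to0^{+}$ (singular terms) and $I(t\varphi)\to-\infty$ as $t\to\infty$ (the critical term dominates because $2^{\sharp}>2$). The content of (\ref{0.4})--(\ref{0.5}) is precisely that, after calibration by the powers of $S\max_{M}B$ and the universal constant $C(n,p,k)$, the singular contributions $\tfrac{1}{2^{\sharp}t^{2^{\sharp}}}\int_{M}A\varphi^{-2^{\sharp}}$ and $\tfrac{1}{(p-1)t^{p-1}}\int_{M}C\varphi^{-(p-1)}$ stay of size $<\alpha$ for a suitable $t_{1}$ with $\|t_{1}\varphi\|<t_{*}$, so that $I(t_{1}\varphi)<\alpha$. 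Choosing a second point $e=t_{2}\varphi$ with $t_{2}$ large, $\|e\|>t_{*}$ and $I(e)<\alpha$, every path from $t_{1}\varphi$ to $e$ crosses the sphere $\|u\|=t_{*}$, so the mountain-pass value $c=\inf_{\gamma}\max_{s}I(\gamma(s))\ge\alpha>0$ is well defined, with the interior base point $t_{1}\varphi$ replacing the usual origin that the singularity forbids.

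The main obstacle is \emph{compactness at the critical Sobolev exponent} $2^{\sharp}$. Since $2^{\sharp}$ is critical for $H^{2}_{k}(M)\hookrightarrow L^{2^{\sharp}}(M)$, the Palais--Smale condition fails in general and a PS sequence may shed mass into a bubble. The crux is to show that the level $c$ lies strictly below the first energy level at which concentration can occur, a threshold governed by the sharp Sobolev constant of the polyharmonic operator; this is exactly what the smallness encoded in (\ref{0.4})--(\ref{0.5}) is designed to guarantee. Below that level one rules out concentration, so a PS sequence $(u_{m})$ at level $c$ converges strongly in $H^{2}_{k}(M)$ to a nonzero limit $u$. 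Controlling the singular nonlinearities along the sequence, namely securing uniform lower bounds that prevent $u_{m}\to0$ on a set of positive measure, is the technically delicate point and again rests on the positivity of $A$ and $C$.

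Finally I would conclude. The limit $u\ge0$ is a weak solution of (\ref{0.1}); the positivity of the Green function of the coercive operator $P_{g}$, through the representation $u(x)=\int_{M}G(x,y)\big[\,Bu^{2^{\sharp}-1}+Au^{-(2^{\sharp}+1)}+Cu^{-p}\,\big](y)\,dv_{g}$, forces $u>0$ everywhere and bounded below by a positive constant. Once $u$ is bounded away from $0$ the right-hand side of (\ref{0.1}) is smooth in $u$, so a standard elliptic bootstrap for the order-$2k$ operator $P_{g}$ upgrades $u$ to a smooth positive solution, completing the argument.
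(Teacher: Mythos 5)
Your proposal diverges from the paper's strategy in a way that leaves two genuine gaps. First, you propose to run the mountain-pass argument directly on the singular functional $I$, defined only on the ``positive cone'' where the integrals $\int_{M}A\,u^{-2^{\sharp}}dv_{g}$ and $\int_{M}C\,u^{-(p-1)}dv_{g}$ are finite. That set is neither open nor complete in $H_{k}^{2}(M)$ (elements of $H_{k}^{2}(M)$ can vanish on large sets, and nothing prevents a path or a Palais--Smale sequence from approaching such functions), so the deformation lemma and the mountain-pass theorem simply do not apply there; saying that critical points are ``automatically kept away from the singular set'' does not repair the minimax machinery itself. This is precisely the difficulty the paper circumvents by introducing the $\varepsilon$-approximating equations (\ref{2.1}): the regularized terms $A(x)u^{+}/(\varepsilon+(u^{+})^{2})^{2^{\flat}+1}$ are bounded, the functional $I_{\varepsilon}$ is $C^{1}$ on all of $H_{k}^{2}(M)$, the mountain pass is applied to $I_{\varepsilon}$, and only afterwards does one let $\varepsilon\rightarrow 0$, using the positive Green function together with Fatou's lemma to obtain a uniform positive lower bound on the approximating solutions so that the singular terms pass to the limit.

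Second, your compactness step rests on the claim that hypotheses (\ref{0.4})--(\ref{0.5}) force the mountain-pass level below the first concentration threshold, so that Palais--Smale sequences converge strongly. That is not what these hypotheses do, and no argument is given for it: in the paper they serve only to produce the barrier geometry, namely $I_{\varepsilon}(t_{1}\varphi)<\Phi(t_{0})\leq I_{\varepsilon}(t_{0}\varphi)$, and the constant $S$ appearing in them is a generic embedding constant from coercivity, not the sharp Sobolev constant that governs bubbling energies; a genuine sub-threshold estimate would require test-function (bubble) computations that are absent from your sketch. The paper in fact never establishes, nor needs, strong Palais--Smale convergence: the bounded PS sequence for $I_{\varepsilon}$ is only passed to the limit \emph{weakly} in the Euler--Lagrange identity (\ref{2.18}) (the critical term converges weakly in $L^{2^{\sharp}/(2^{\sharp}-1)}(M)$, which suffices against fixed test functions, and the regularized singular terms converge by dominated convergence), and nontriviality of the weak limit comes not from an energy-level argument but from the bound (\ref{2.24}) on the singular term, which would blow up as in (\ref{2.25}) if the solutions degenerated to zero. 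The price is that the solution need not sit at the mountain-pass level, but the theorem only asserts existence. So your outline, as written, would fail at both the minimax step and the compactness step; the regularization-plus-weak-limit route is what makes the proof close.
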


In the second section we prove, by means of the Ekeland's lemma, the
existence of a second solution to equation (\ref{0.1}). In particular, by
setting $t_{0}=\left( \frac{1}{S\underset{x\in M}{\max }B(x)}\right) ^{\frac{%
n-2k}{4k}}$and $a=\frac{1}{\left( 2\left( n-k\right) \right) ^{\frac{%
2^{\natural }}{2}}}$ we obtain the following theorem:

\begin{theorem}
\label{theorem2} Let $(M,g)$ be a compact Riemannian manifold of dimension $%
n>2k,\;(k\in \mathbb{N}^{\ast })$. Suppose that the operator $P_{g}$ is
coercive; has a Green positive function and there is a constant $C(n,p,k)>0$
which depends only on $n,p,k$ such that: 
\begin{equation}
\frac{\Vert \varphi \Vert ^{2^{\sharp }}}{2^{\sharp }}\int_{M}\frac{A(x)}{%
\varphi ^{2^{\natural }}}dv_{g}\leq C\left( n,p,k\right) \left( S\underset{%
x\in M}{\max }B(x)\right) ^{\frac{2+2^{\sharp }}{2-2^{\sharp }}}  \label{0.6}
\end{equation}%
and 
\begin{equation}
\frac{\Vert \varphi \Vert ^{p-1}}{p-1}\int_{M}\frac{C(x)}{\varphi ^{p-1}}%
dv_{g}\leq C\left( n,p,k\right) \left( S\underset{x\in M}{\max }B(x)\right)
^{\frac{p+1}{2-2^{\sharp }}}  \label{0.7}
\end{equation}%
for some smooth function $\varphi >0$ on $M$. If moreover for every $%
\varepsilon \in \left] 0,\lambda ^{\ast }\right[ $ where $\lambda ^{\ast }$
is a positive constant, the following conditions occur 
\begin{equation*}
0<\left( S\underset{x\in M}{\max }B(x)\right) <\frac{a}{4}
\end{equation*}%
\begin{equation}
\left( \int\limits_{M}^{{}}\sqrt{A(x)}dv_{g}\right) ^{2}\left( \frac{8-a}{a}%
\right) \frac{(n-2k)Q_{\tilde{g}}}{2t_{0}^{2^{\sharp }}}>2^{\sharp }\frac{k}{%
n}t_{0}^{2}(1-\frac{a}{8})  \label{0.8}
\end{equation}%
and 
\begin{equation}
\int_{M}Q_{g}dv_{g}\neq k(n-1)\omega _{n}  \label{0.9}
\end{equation}%
where $\omega _{n}$ is the volume of the round sphere, $2^{\sharp }=\frac{2n%
}{n-2k}\;,3<p<2^{\sharp }+1$. Then the equation (\ref{0.1}) admits a second
smooth solution.
\end{theorem}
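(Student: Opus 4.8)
The plan is to realise the second solution as a local minimiser of the same energy functional that governs Theorem~\ref{theorem1}, sitting in the potential well that the singular terms dig out, and to extract it through Ekeland's variational principle once the relevant Palais--Smale sequences are shown to be compact. I would work with
\begin{equation*}
J(u)=\frac{1}{2}\Vert u\Vert ^{2}-\frac{1}{2^{\sharp }}\int_{M}B\,u^{2^{\sharp }}\,dv_{g}+\frac{1}{2^{\sharp }}\int_{M}\frac{A}{u^{2^{\sharp }}}\,dv_{g}+\frac{1}{p-1}\int_{M}\frac{C}{u^{p-1}}\,dv_{g},
\end{equation*}
whose Euler--Lagrange equation is (\ref{0.1}) and which is of class $C^{1}$ on the cone $\{u\in H_{k}^{2}(M):u\geq \delta >0\}$. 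Since $A,C>0$, each fibre $t\mapsto J(tu)$ blows up to $+\infty$ as $t\to 0^{+}$ because of the singular terms, while $J(tu)\to -\infty$ as $t\to +\infty$ because $2^{\sharp }>2$. Using (\ref{0.3}) one sees that the combination $\tfrac{1}{2}s^{2}-\tfrac{1}{2^{\sharp }}(S\max_{x\in M}B(x))s^{2^{\sharp }}$ peaks exactly at the scale $s=t_{0}$, so the mountain separating the two ends sits at $\Vert u\Vert \approx t_{0}$; the valley below it, a genuine local minimum, is the critical point I aim to capture, while the mountain itself produces the higher mountain--pass level $c_{1}$ of Theorem~\ref{theorem1}.

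Next I would set up the minimisation on a ball $\overline{B}_{\rho}$ with $\rho$ slightly below $t_{0}$. The hypotheses (\ref{0.6})--(\ref{0.7}), shared with Theorem~\ref{theorem1}, together with (\ref{0.3}) guarantee that $J$ is bounded below on $\overline{B}_{\rho}$, that $c_{0}:=\inf_{\overline{B}_{\rho}}J$ is finite and attained away from the origin, and that $\inf_{\partial B_{\rho}}J>c_{0}$, so the infimum is interior. Applying Ekeland's principle on the complete metric space $\overline{B}_{\rho}$ then furnishes a minimising sequence $(u_{m})$ with $J(u_{m})\to c_{0}$ and $J^{\prime }(u_{m})\to 0$ in the dual of $H_{k}^{2}(M)$. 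Here the singular terms again work in my favour: testing the near-equation against $u_{m}$ and using $A,C>0$ yields a uniform lower bound $u_{m}\geq \delta _{0}>0$, which keeps the whole sequence inside the region where $J$ is smooth and the singular integrals stay finite.

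The heart of the argument, and the step I expect to be the main obstacle, is compactness of $(u_{m})$. Because $2^{\sharp }$ is the critical exponent for $H_{k}^{2}(M)\subset L^{2^{\sharp }}(M)$, the term $\int_{M}B\,u^{2^{\sharp }}$ is not weakly continuous, and a Palais--Smale sequence can shed mass by concentrating into a standard bubble whose energy defect is measured by the best constant $S$. Writing $u_{m}=u_{0}+w_{m}$ (weak limit plus concentrating remainder) and running a Brezis--Lieb / concentration--compactness analysis, I would show that a nonvanishing bubble forces $c_{0}$ to reach the one-bubble energy $c^{\ast }$, which in the present normalisation is comparable to the barrier $2^{\sharp }\tfrac{k}{n}t_{0}^{2}$ on the right of (\ref{0.8}) (recall $\tfrac{k}{n}=\tfrac{1}{2}-\tfrac{1}{2^{\sharp }}$). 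The smallness hypothesis $0<S\max_{x\in M}B(x)<\tfrac{a}{4}$ preserves the geometry of the well, while the strict inequality (\ref{0.8}), in which the favourable singular contribution $\bigl(\int_{M}\sqrt{A}\,dv_{g}\bigr)^{2}$ weighted by $Q_{\tilde{g}}$ and $(8-a)/a$ is balanced against that barrier, localises $c_{0}$ strictly inside the compactness range and strictly below $c_{1}$. To evaluate the barrier one pulls the problem back, through the conformal covariance law (\ref{0.0}), to a Yamabe--type conformal metric $\tilde{g}$ and tests with a one-parameter family of truncated Green's functions of $P_{g}$ (indexed by $\varepsilon \in \left] 0,\lambda ^{\ast }\right[ $); condition (\ref{0.9}), $\int_{M}Q_{g}\,dv_{g}\neq k(n-1)\omega _{n}$, is precisely the non-degeneracy keeping $(M,g)$ conformally distinct from the round sphere, so that this test-function estimate is strict rather than borderline.

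Finally I would identify and upgrade the limit. Strong convergence $u_{m}\to u_{0}$ in $H_{k}^{2}(M)$ gives $J(u_{0})=c_{0}$ and $J^{\prime }(u_{0})=0$, so $u_{0}$ solves (\ref{0.1}) weakly with $u_{0}\geq \delta _{0}>0$; elliptic regularity for the order-$2k$ operator $P_{g}$, bootstrapped on the now-regular right-hand side, yields $u_{0}\in C^{\infty }(M)$, and the positivity of the Green function secures $u_{0}>0$. Since $c_{0}<c_{1}$, the two critical levels are distinct, so $u_{0}$ differs from the mountain--pass solution of Theorem~\ref{theorem1}, which delivers the claimed second smooth solution of (\ref{0.1}).
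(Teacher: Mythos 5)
Your overall skeleton --- realise the second solution as a local minimiser trapped in a well below the mountain-pass level, extract it by Ekeland's principle, then bootstrap regularity --- is indeed the paper's strategy, but your execution breaks down at the very first step. You propose to work directly with the singular functional $J$ on the cone $\{u\in H_{k}^{2}(M):u\geq \delta >0\}$ and call it $C^{1}$ there. Since $2k<n$, $H_{k}^{2}(M)$ does not embed into $C^{0}(M)$, so this cone has empty interior in $H_{k}^{2}(M)$ and $J$ is not finite (let alone differentiable) on any open set: every $H_{k}^{2}$-neighbourhood of a function $u\geq \delta$ contains functions vanishing or changing sign on sets of positive measure, where $\int_{M}A\,u^{-2^{\sharp }}dv_{g}$ is meaningless or infinite. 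Ekeland's principle on such a set yields only a one-sided variational inequality along directions that stay in the cone; the conclusion $\Vert DJ(\overline{u}_{\varepsilon })\Vert _{V^{\prime }}\leq \sqrt{\varepsilon }$ requires, as the paper's own statement of the lemma notes, that the point lie in the \emph{interior} of the constraint set, which here is empty. Your device for staying in the cone --- ``testing the near-equation against $u_{m}$'' to get $u_{m}\geq \delta _{0}>0$ --- does not repair this: an integral identity gives no pointwise lower bound, and members of a minimising sequence satisfy no equation. The paper avoids all of this by running Ekeland on the \emph{regularised} functionals $I_{\varepsilon }$ (with $(\varepsilon +(u^{+})^{2})$ in the denominators, exactly as in Theorem \ref{theorem1}), which are $C^{1}$ on all of $H_{k}^{2}(M)$, on the ball $\overline{B}(0,t_{1})$, and only afterwards lets $\varepsilon \rightarrow 0$; pointwise positivity is obtained only for the actual solutions, via the positive Green function representation and Fatou's lemma.

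Your compactness and distinctness steps also misread how the hypotheses function. The paper performs no Brezis--Lieb or concentration-compactness analysis and never needs strong $H_{k}^{2}$ convergence: the bounded Palais--Smale sequences converge weakly, the critical term passes to the limit weakly in $L^{\frac{2^{\sharp }}{2^{\sharp }-1}}(M)$, the regularised singular terms converge strongly, and nontriviality of the weak limit comes from the singular term itself (inequality (\ref{2.25})). Hypothesis (\ref{0.9}) is not a ``conformally distinct from the sphere'' non-degeneracy for bubble estimates; it is the hypothesis of Ndiaye's theorem allowing a conformal change to a metric $\tilde{g}$ with $Q_{\tilde{g}}$ constant, and that constant enters the H\"{o}lder estimate of Lemma \ref{lem4}. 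Hypothesis (\ref{0.8}) is not compared to any one-bubble energy (no such derivation exists in your sketch or the paper); it is used first to produce the barrier $I_{\varepsilon }(t_{3}\varphi )>\Phi (t_{0})$ on the small sphere $\Vert u\Vert =t_{3}$, and again in the distinctness step. Finally, your conclusion ``$c_{0}<c_{1}$, hence the solutions differ'' is itself gapped: under weak convergence neither solution is known to realise its min or max level. The paper sidesteps this by estimating the energy of the mountain-pass solution $u$ directly from the Nehari identity $\Vert u\Vert ^{2}=\int_{M}Bu^{2^{\sharp }}dv_{g}+\int_{M}Au^{-2^{\sharp }}dv_{g}+\int_{M}Cu^{1-p}dv_{g}$ together with (\ref{0.8}) and $S\max_{M}B<\frac{a}{4}$, obtaining $I(u)>\Phi (t_{0})$, while the Ekeland solution $v$ satisfies $\Vert v\Vert \leq t_{1}$ with energy below $\Phi (t_{0})$; alternatively $\Vert u\Vert \geq t_{4}>t_{1}\geq \Vert v\Vert $ separates them by norm.
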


In the last section we give a nonexistence result of solution. Mainly we
show the following result:

\begin{theorem}
\label{theorem3} Given $(M,g)$ a compact Riemannian manifold of dimension $%
n>2k,\;(k\in \mathbb{N}^{\ast })$ and $A,B,C$ are positive smooth functions
on $M$ and $2<p<2^{\sharp }+1$. Assume that 
\begin{equation}
C(n,p,k)\left( \frac{\int\limits_{M}\sqrt{B.C}dv_{g}}{\int\limits_{M}Bdv_{g}}%
\right) ^{2.\frac{2^{\sharp }}{p-1+2^{\sharp }}}\int%
\limits_{M}Bdv_{g}>(SR)^{2}  \label{0.10}
\end{equation}%
where $S,R$ are positive constants and 
\begin{equation*}
C(n,p,k)=\frac{2^{\sharp }+p-1}{p-1}\left( \frac{p-1}{2^{\sharp }}\right) ^{%
\frac{2^{\sharp }}{2^{\sharp }+p-1}}\text{.}
\end{equation*}%
Then the equation (\ref{0.1}) has no smooth positive solution $u$ with
energy $\Vert u\Vert _{H_{k}^{2}(M)}\leq R$.
\end{theorem}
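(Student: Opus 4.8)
The plan is to argue by contradiction: assume that, under hypothesis (\ref{0.10}), there exists a smooth positive solution $u$ of (\ref{0.1}) with $\Vert u\Vert _{H_{k}^{2}(M)}\leq R$. Testing the equation against $u$ and using the self-adjointness of $P_{g}$ together with the definition (\ref{0.2}) of the norm, I would first derive the energy identity
\begin{equation*}
\Vert u\Vert ^{2}=\int_{M}Bu^{2^{\sharp }}dv_{g}+\int_{M}\frac{A}{u^{2^{\sharp }}}dv_{g}+\int_{M}\frac{C}{u^{p-1}}dv_{g}.
\end{equation*}
Since $A>0$, the middle term is nonnegative and may be discarded, leaving $\Vert u\Vert ^{2}\geq I_{B}+I_{C}$, where I abbreviate $I_{B}=\int_{M}Bu^{2^{\sharp }}dv_{g}$ and $I_{C}=\int_{M}Cu^{-(p-1)}dv_{g}$. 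The problem is thereby reduced to bounding $I_{B}+I_{C}$ from below in terms of the data $B,C$ alone, independently of $u$.

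The next step is to make the quantity $\int_{M}\sqrt{BC}\,dv_{g}$ appear. Writing $\sqrt{BC}=\sqrt{Bu^{p-1}}\cdot \sqrt{Cu^{-(p-1)}}$ and applying Cauchy--Schwarz gives $\left( \int_{M}\sqrt{BC}\,dv_{g}\right) ^{2}\leq \left( \int_{M}Bu^{p-1}dv_{g}\right) I_{C}$. Since $2<p<2^{\sharp }+1$, the exponent $\mu =\frac{p-1}{2^{\sharp }}$ lies in $(0,1)$, so Hölder's inequality applied to the factorization $Bu^{p-1}=(Bu^{2^{\sharp }})^{\mu }B^{1-\mu }$ yields $\int_{M}Bu^{p-1}dv_{g}\leq I_{B}^{\mu }\left( \int_{M}B\,dv_{g}\right) ^{1-\mu }$. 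Combining the two estimates produces the key lower bound
\begin{equation*}
I_{C}\geq \left( \int_{M}\sqrt{BC}\,dv_{g}\right) ^{2}I_{B}^{-\mu }\left( \int_{M}B\,dv_{g}\right) ^{\mu -1},
\end{equation*}
in which the unknown $u$ now enters only through the single scalar $I_{B}$.

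The heart of the argument is then a one-variable minimization. Substituting this bound into $\Vert u\Vert ^{2}\geq I_{B}+I_{C}$ and minimizing the right-hand side over all admissible values $I_{B}>0$ (the elementary minimization of $t\mapsto t+Kt^{-\mu }$ over $t>0$, a Young-type balancing of the two terms) gives a lower bound for $\Vert u\Vert ^{2}$ that no longer involves $u$. I expect the exponent bookkeeping to collapse exactly to the power $2\cdot \frac{2^{\sharp }}{p-1+2^{\sharp }}$ and the multiplicative constant to be precisely $C(n,p,k)=\frac{2^{\sharp }+p-1}{p-1}\left( \frac{p-1}{2^{\sharp }}\right) ^{2^{\sharp }/(2^{\sharp }+p-1)}$, so that
\begin{equation*}
\Vert u\Vert ^{2}\geq C(n,p,k)\left( \frac{\int_{M}\sqrt{BC}\,dv_{g}}{\int_{M}B\,dv_{g}}\right) ^{2\cdot \frac{2^{\sharp }}{p-1+2^{\sharp }}}\int_{M}B\,dv_{g}.
\end{equation*}
Finally, the energy constraint $\Vert u\Vert \leq R$ together with the Sobolev inequality (\ref{0.3}), which is where the constant $S$ enters, bounds the left-hand side above by $(SR)^{2}$, directly contradicting the hypothesis (\ref{0.10}); hence no such solution can exist.

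The step I expect to be the genuine obstacle is this last optimization together with its exponent algebra: one must verify that the Cauchy--Schwarz exponent $\tfrac{1}{2}$, the Hölder exponent $\mu =\frac{p-1}{2^{\sharp }}$, and the Young balancing all conspire to reproduce the \emph{sharp} constant $C(n,p,k)$ and the precise power $\frac{2^{\sharp }}{p-1+2^{\sharp }}$ appearing in (\ref{0.10}), rather than some cruder constant lost along the way. A secondary point requiring care is the correct tracking of the Sobolev constant $S$ in the closing comparison, so that the contradiction is obtained against $(SR)^{2}$ exactly as stated.
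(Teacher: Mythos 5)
Your proposal is correct and follows essentially the same route as the paper's proof: test (\ref{0.1}) against $u$, discard the positive $A$-term, chain the Cauchy--Schwarz and H\"older inequalities to reduce everything to the single scalar $t=\int_M B u^{2^\sharp}\,dv_g$, and minimize $f(t)=t+Dt^{(1-p)/2^{\sharp}}$ to contradict (\ref{0.10}); the exponent bookkeeping you flagged as the main risk does collapse exactly to the stated constant $C(n,p,k)$ and power $\tfrac{2^{\sharp}}{p-1+2^{\sharp}}$, just as in the paper. One small correction: the closing bound $\Vert u\Vert^{2}\leq (SR)^{2}$ does not come from the Sobolev inequality (\ref{0.3}), which bounds $\Vert u\Vert_{2^{\sharp}}$ by $\Vert u\Vert$ and hence goes the wrong way, but from the equivalence of the $P_{g}$-norm with the standard Sobolev norm, i.e. $\Vert u\Vert \leq S\Vert u\Vert_{H_{k}^{2}(M)}\leq SR$, which is exactly how the paper introduces the constant $S$ at that step.
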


\section{Existence of a first solution}

In this section we show the theorem \ref{theorem1}. Before starting the
proof, we first give an example of manifolds where the GJMS operator $Pg$
has a positive Green function.

\begin{proposition}
Suppose that the metric $g$ is Einstein with positive scalar curvature of
dimension $n>2k$, then the GJMS operator $P_{g}$ admits a Green positive
function.
\end{proposition}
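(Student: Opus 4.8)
The plan is to exploit the explicit product formula for the GJMS operator on Einstein manifolds, which factors $P_g$ into a product of shifted Laplacians. This factorization is the crucial structural input that makes the positivity of the Green function accessible.

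First I would recall that on an Einstein manifold with $\mathrm{Ric}_g = \frac{n-1}{?}\,\lambda\, g$ (normalized so the scalar curvature $R_g = n(n-1)\lambda$ is positive), the GJMS operator is known to admit the explicit factorization
\begin{equation*}
P_g = \prod_{j=1}^{k}\Bigl(\Delta + c_j\Bigr),
\end{equation*}
where each constant $c_j$ has the form $c_j = \bigl(\tfrac{n}{2}-j+\alpha_j\bigr)\bigl(\tfrac{n}{2}+j-1-\alpha_j\bigr)\cdot\frac{R_g}{n(n-1)}$ for suitable shifts $\alpha_j$ determined by the Gegenbauer/Graham–Zworski description; concretely the eigenvalues of the factors are all strictly positive when $R_g>0$ and $2k<n$. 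The key point I would verify is that each $c_j>0$, so that every factor $\Delta + c_j$ is a second-order Schrödinger-type operator with strictly positive zeroth-order coefficient.

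Next I would invoke the elementary fact that for a positive constant $c>0$ the operator $L_c := \Delta + c$ on a compact manifold is coercive and its Green function $G_c$ is strictly positive; this follows from the strong maximum principle applied to $L_c$ (since $c>0$, a nonnegative source forces a positive solution) together with the fact that $L_c$ has trivial kernel and hence is invertible. Having established positivity for each single factor, I would build the Green function of the product $P_g = L_{c_1}\cdots L_{c_k}$ by composition: since the factors commute (they are polynomials in the single operator $\Delta$), one has $P_g^{-1} = L_{c_1}^{-1}\cdots L_{c_k}^{-1}$, and the Green function of the composite is the iterated convolution
\begin{equation*}
G(x,y) = \int_M\!\!\cdots\!\!\int_M G_{c_1}(x,z_1)G_{c_2}(z_1,z_2)\cdots G_{c_k}(z_{k-1},y)\,dv_g(z_1)\cdots dv_g(z_{k-1}),
\end{equation*}
which is a positive integral of products of strictly positive kernels, hence strictly positive.

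The main obstacle I anticipate is justifying the explicit factorization and the strict positivity of every constant $c_j$: while the factorization of $P_g$ on Einstein (or more specifically space-form) metrics is classical, pinning down the precise constants and confirming that the hypothesis $2k<n$ together with $R_g>0$ forces all $c_j>0$ requires care, since a single nonpositive factor would break the maximum-principle argument. A secondary technical point is the integrability and regularity needed to legitimize the iterated-convolution formula for the Green function; this is routine elliptic theory once coercivity of each factor is in hand, but it must be stated. If the clean factorization is unavailable in the generality claimed, the fallback is to argue positivity directly from coercivity of $P_g$ plus a GJMS-specific maximum principle, though that is considerably more delicate and I would prefer the factorization route.
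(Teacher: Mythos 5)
Your proposal follows essentially the same route as the paper: it factors $P_g$ on an Einstein manifold into commuting second-order Schr\"odinger operators with constant coefficients $c_l\,Sc$, where $c_l=\frac{(n+2l-2)(n-2l)}{4n(n-1)}>0$ precisely because $2k<n$ and $Sc>0$, obtains a strictly positive Green function for each factor from the maximum principle, and assembles the Green function of $P_g$ as the iterated convolution of these positive kernels. The only divergence is cosmetic and in your favor: the paper writes the factors as $\Delta-c_l\,Sc$, an apparent sign slip given its convention $\Delta=-\mathrm{div}_g(\nabla)$, whereas your form $\Delta+c_j$ with strictly positive zeroth-order term is the one that actually supports the maximum-principle argument.
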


\begin{proof}
On $n$-dimensional Einstein manifold, the GJMS operator of order $k$ is
given by (see \cite{5})%
\begin{equation*}
P_{g}=\dprod\limits_{l=1}^{k}\left( \Delta -c_{l}Sc\right)
\end{equation*}%
where $c_{l}=\frac{\left( n+2l-2\right) \left( n-2l\right) }{4n\left(
n-1\right) }$, $Sc$ stands for the scalar curvature. If the scalar curvature
is positive it is well known that the operator $\Delta -c_{l}Sc$ has a
positive Green function. Denote by $L_{l}=\Delta -c_{l}Sc$, $l=1,...,k$; by
definition of the Green function of $L_{l}$ we know that for all $u\in
C^{\infty }(M)$, 
\begin{equation*}
\left( L_{l}u\right) \left( x\right) =\int_{M}G_{l+1}((x,y)\left(
L_{l+1}L_{l}u\right) (y)dv_{g}\left( y\right) .
\end{equation*}%
So 
\begin{equation*}
u(x)=\int_{M}G_{l}(x,z)\left( L_{l}u\right) \left( z\right) dv_{g}(z)+\frac{1%
}{Vol(M)}\int_{M}u(x)dv_{g}(x)
\end{equation*}%
\begin{equation*}
=\int_{M}\left( \int_{M}G_{l}(x,z)G_{l+1}((z,y)dv_{g}(z)\right) \left(
L_{l+1}L_{l}u\right) (y)dv_{g}\left( y\right) +\frac{1}{Vol(M)}%
\int_{M}u(x)dv_{g}(x)
\end{equation*}%
and letting%
\begin{equation*}
G_{l,l+1}(x,y)=G_{l}\ast
G_{l+1}(x,y)=\int_{M}G_{l}(x,z)G_{l+1}((z,y)dv_{g}(z).
\end{equation*}%
By induction, we get%
\begin{equation*}
u(x)=\int_{M}G_{1}\ast ...\ast G_{k}(x,y)P_{g}(y)dv_{g}\left( y\right) .
\end{equation*}%
Thus $P_{g}$ admits a positive Green function.
\end{proof}

To show the existence of solutions to equation (\ref{0.1}), we follow the
strategy in the proof of the paper by Hebey-Pacard-Pollack \cite{6}. We
consider the following $\epsilon $-approximating equations ( $\varepsilon >0$
)%
\begin{equation}
P_{g}(u)=B\left( x\right) \left( u^{+}\right) ^{2^{\sharp }-1}+\frac{A\left(
x\right) u^{+}}{\left( \varepsilon +\left( u^{+}\right) ^{2}\right)
^{2^{\flat }+1}}+\frac{C(x)u^{+}}{\left( \varepsilon +\left( u^{+}\right)
^{2}\right) ^{\frac{p+1}{2}}}  \label{2.1}
\end{equation}%
where $2^{\flat }=\frac{2^{\sharp }}{2}$, $p>1.$Which gives us a sequence $%
\left( u_{\varepsilon }\right) _{\varepsilon }$ of solutions to (\ref{2.1}).
The solution of equation (\ref{0.1}) is then obtained as the limiting of $%
\left( u_{\varepsilon }\right) _{\varepsilon }$, when $\varepsilon
\rightarrow 0$. To get rid of negative exponents, we consider the energy
functional associated to (\ref{2.1}) defined by, for any $\varepsilon >0$ 
\begin{equation*}
I_{\varepsilon }\left( u\right) =I^{(1)}\left( u\right) +I_{\varepsilon
}^{\left( 2\right) }\left( u\right)
\end{equation*}%
\ where $I^{\left( 1\right) }:H_{k}^{2}\left( M\right) \rightarrow \mathbb{R}
$ is given by 
\begin{equation*}
I^{\left( 1\right) }\left( u\right) =\frac{1}{2}\int_{M}uP_{g}(u)dv_{g}-%
\frac{1}{2^{\sharp }}\int_{M}B\left( x\right) \left( u^{+}\right)
^{2^{\sharp }}dv_{g}
\end{equation*}%
and \ $I_{\varepsilon }^{\left( 2\right) }:$\ $H_{k}^{\text{ }2}\left(
M\right) \longrightarrow \mathbb{R}$\ is%
\begin{equation*}
I_{\varepsilon }^{\left( 2\right) }\left( u\right) =\frac{1}{2^{\sharp }}%
\int_{M}\frac{A\left( x\right) }{\left( \varepsilon +\left( u^{+}\right)
^{2}\right) ^{2^{\flat }}}dv_{g}+\frac{1}{p-1}\int_{M}\frac{C(x)}{\left(
\varepsilon +\left( u^{+}\right) ^{2}\right) ^{\frac{p-1}{2}}}dv_{g}\text{.}
\end{equation*}%
It is easy to check the following inequality

\begin{equation}
\Phi \left( \Vert u\Vert \right) \leq I^{\left( 1\right) }\left( u\right)
\leq \Psi \left( \left\Vert u\right\Vert \right)   \label{2.2}
\end{equation}%
with%
\begin{equation*}
\Phi \left( t\right) =\frac{1}{2}t^{2}-\frac{1}{2^{\sharp }}\left(
S\max_{M}\left\vert B\right\vert \right) t^{2^{\sharp }}
\end{equation*}%
and%
\begin{equation*}
\Psi \left( t\right) =\frac{1}{2}t^{2}+\frac{1}{2^{\sharp }}\left(
S\max_{x\in M}B(x)\right) t^{2^{\sharp }}\text{.}
\end{equation*}%
The function $\Phi \left( t\right) $ is increasing on $\left[ 0,t_{0}\right] 
$ and decreasing on $\left] t_{0},+\infty \right[ $, where%
\begin{equation}
t_{0}=\left( \frac{1}{S\underset{x\in M}{\max }B(x)}\right) ^{\frac{n-2k}{4k}%
}  \label{2.3}
\end{equation}%
and 
\begin{equation}
\Phi \left( t_{0}\right) =\left( \frac{1}{2}-\frac{1}{2^{\sharp }}\right)
\left( \frac{1}{S\underset{x\in M}{\max }B(x)}\right) ^{\frac{n-2k}{2k}}=%
\frac{k}{n}t_{0}^{2}\text{.}  \label{2.4}
\end{equation}

\begin{lemma}
\label{lem1} Let $\theta >0$ such that 
\begin{equation*}
\left( \frac{a}{2}\right) ^{\frac{2}{2^{\natural }}}<\theta ^{2}<a^{\frac{2}{%
2^{\natural }}}
\end{equation*}%
where%
\begin{equation*}
a=\frac{1}{\left( 2\left( n-k\right) \right) ^{\frac{2^{\natural }}{2}}}
\end{equation*}%
and put%
\begin{equation*}
t_{1}=\theta t_{0}\text{.}
\end{equation*}%
Then we have the following double inequality%
\begin{equation}
\Psi \left( t_{1}\right) \leq \theta ^{2}\frac{2^{\sharp }+2}{2^{\sharp }-2}%
\Phi \left( t_{0}\right) <\frac{1}{2k}\Phi \left( t_{0}\right) \text{.}
\label{2.5}
\end{equation}
\end{lemma}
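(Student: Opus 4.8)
The plan is to reduce both halves of \eqref{2.5} to elementary one-variable inequalities for $\theta$, by exploiting a scaling identity hidden in the definition of $t_0$. Writing $\beta=S\max_{x\in M}B(x)$ for brevity, the definition $t_0=\beta^{-\frac{n-2k}{4k}}$ from \eqref{2.3} together with $2^{\sharp}=\frac{2n}{n-2k}$ gives $t_0^{2^{\sharp}}=\beta^{-\frac{n}{2k}}$, whence the key relation
\begin{equation*}
\beta\,t_0^{2^{\sharp}}=\beta^{\,1-\frac{n}{2k}}=\beta^{-\frac{n-2k}{2k}}=t_0^2 .
\end{equation*}
Spotting this identity is the only genuinely useful observation; once it is in hand the remainder is bookkeeping, and I do not expect a real obstacle.

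First I would rewrite both sides in terms of $t_0^2$. Since $t_1=\theta t_0$, the definition of $\Psi$ gives
\begin{equation*}
\Psi(t_1)=\tfrac12\theta^2 t_0^2+\tfrac{1}{2^{\sharp}}\beta\,\theta^{2^{\sharp}}t_0^{2^{\sharp}}
=t_0^2\Bigl(\tfrac12\theta^2+\tfrac{1}{2^{\sharp}}\theta^{2^{\sharp}}\Bigr),
\end{equation*}
using $\beta t_0^{2^{\sharp}}=t_0^2$, while \eqref{2.4} reads $\Phi(t_0)=\frac{k}{n}t_0^2$. I would also record the simplification $\frac{2^{\sharp}+2}{2^{\sharp}-2}=\frac{n-k}{k}$, immediate from $2^{\sharp}+2=\frac{4(n-k)}{n-2k}$ and $2^{\sharp}-2=\frac{4k}{n-2k}$. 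Thus the middle quantity becomes $\theta^2\frac{n-k}{k}\Phi(t_0)=\theta^2\frac{n-k}{n}t_0^2$.

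With these in place, the left inequality $\Psi(t_1)\le\theta^2\frac{n-k}{k}\Phi(t_0)$ reduces, after dividing by $t_0^2>0$ and using $\frac{n-k}{n}-\frac12=\frac{n-2k}{2n}=\frac{1}{2^{\sharp}}$, to the single scalar inequality $\theta^{2^{\sharp}}\le\theta^2$, which holds whenever $0<\theta\le1$ because $2^{\sharp}>2$. For the right inequality, dividing $\theta^2\frac{n-k}{k}\Phi(t_0)<\frac{1}{2k}\Phi(t_0)$ by $\Phi(t_0)>0$ leaves exactly $\theta^2<\frac{1}{2(n-k)}$.

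Finally I would close the loop with the hypothesis on $\theta$. From $a=(2(n-k))^{-2^{\natural}/2}$ one computes $a^{2/2^{\natural}}=\frac{1}{2(n-k)}$, independently of the precise value of $2^{\natural}$, so the assumed upper bound $\theta^2<a^{2/2^{\natural}}$ is precisely the condition $\theta^2<\frac{1}{2(n-k)}$ demanded by the right inequality. Moreover $n>2k$ forces $n-k>k\ge1$, so $\theta^2<\frac{1}{2(n-k)}<1$, giving $\theta<1$ and hence the left inequality as well. In short, both halves of \eqref{2.5} collapse to one-variable statements and only the upper bound on $\theta$ is actually used; the lower bound $\left(\frac a2\right)^{2/2^{\natural}}<\theta^2$ plays no role here and is presumably imposed for later arguments.
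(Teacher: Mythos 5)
Your proof is correct and follows essentially the same route as the paper's: both hinge on the scaling identity $\left(S\max_{M}B\right)t_{0}^{2^{\sharp}}=t_{0}^{2}$, the factorization $\Psi(t_{1})=t_{0}^{2}\bigl(\tfrac12\theta^{2}+\tfrac{1}{2^{\sharp}}\theta^{2^{\sharp}}\bigr)$, the bound $\theta^{2^{\sharp}}\leq\theta^{2}$ for $\theta\leq1$, and the reduction of the strict inequality to $\theta^{2}<\tfrac{1}{2(n-k)}=a^{2/2^{\natural}}$. You are in fact slightly more explicit than the paper in justifying $\theta<1$ and in computing $a^{2/2^{\natural}}$, but the argument is the same.
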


\begin{proof}
In fact 
\begin{eqnarray*}
\Psi \left( t_{1}\right)  &=&\frac{1}{2}t_{1}^{2}+S\underset{x\in M}{\max }%
B(x)\frac{t_{1}^{2^{\sharp }}}{2^{\sharp }} \\
&=&\theta ^{2}\left( \frac{1}{2}t_{0}^{2}+S\underset{x\in M}{\max }%
B(x)\theta ^{2^{\sharp }-2}\frac{t_{0}^{2^{\sharp }}}{2^{\sharp }}\right) 
\text{.}
\end{eqnarray*}%
Since 
\begin{equation}
t_{0}^{2^{\sharp }}\left( S\underset{x\in M}{\max }B(x)\right) =t_{0}^{2}
\label{2.6}
\end{equation}%
we get 
\begin{eqnarray*}
\Psi \left( t_{1}\right)  &=&\theta ^{2}\left( \frac{1}{2}t_{0}^{2}+\frac{%
\theta ^{2^{\sharp }-2}}{2^{\sharp }}t_{0}^{2}\right)  \\
&=&\theta ^{2}t_{0}^{2}\left[ \frac{1}{2}+\frac{\theta ^{2^{\sharp }-2}}{%
2^{\sharp }}\right]  \\
&\leq &\theta ^{2}t_{0}^{2}\left( \frac{1}{2}+\frac{n-2k}{2n}\right)  \\
&\leq &\left( 1-\frac{k}{n}\right) \theta ^{2}t_{0}^{2}.
\end{eqnarray*}%
And since%
\begin{equation*}
\frac{2^{\sharp }+2}{2^{\sharp }-2}=\left( \frac{n}{k}-1\right) \ \text{\
and \ }\Phi \left( t_{0}\right) =\frac{k}{n}t_{0}^{2}
\end{equation*}%
we infer that 
\begin{equation*}
\Psi \left( t_{1}\right) \leq \theta ^{2}\frac{2^{\sharp }+2}{2^{\sharp }-2}%
\Phi \left( t_{0}\right) <\frac{1}{2k}\Phi \left( t_{0}\right) \text{.}
\end{equation*}
\end{proof}

Now we check the Mountain-Pass lemma conditions for the functional $%
I_{\varepsilon }$.

\begin{lemma}
\label{lem2} The functional $I_{\epsilon }$ satisfies the following
condition: there exists an open ball $B(u_{1},\rho )$ of radius $\rho >0$
and of center some $u_{1}$ in $H_{k}^{2}\left( M\right) $ and there are $%
u_{2}\notin \overline{B}(u_{1},\rho )$ and a real number $c_{o}$\ such that 
\begin{equation*}
\max \left( I_{\epsilon }(u_{1}),I_{\epsilon }(u_{2})\right) <c_{o}\leq
I_{\epsilon }(u)
\end{equation*}%
for all $u\in \partial B(u_{1},\rho )$.
\end{lemma}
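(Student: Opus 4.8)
The plan is to exhibit the mountain‑pass geometry along the ray determined by the test function $\varphi$ of the hypotheses, choosing the center, the far point, and the threshold explicitly so that the radial bounds (\ref{2.2}) together with Lemma \ref{lem1} do all the work. I would set $\psi=\varphi/\Vert\varphi\Vert$, take $t_{1}=\theta t_{0}$ as in Lemma \ref{lem1}, and fix the center $u_{1}=t_{1}\psi$ (so $\Vert u_{1}\Vert=t_{1}$), the radius $\rho=t_{0}-t_{1}$, the far point $u_{2}=t_{2}\psi$ with a large $t_{2}>t_{0}$ to be chosen, and the level $c_{0}=\Phi(t_{0}-2t_{1})$. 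The constraint $\theta^{2}<a^{2/2^{\natural}}$ forces $t_{1}<t_{0}/2$, so that $t_{0}-2t_{1}>0$ and $c_{0}=\Phi\bigl((1-2\theta)t_{0}\bigr)>0$.

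The barrier on the sphere is the clean part. For any $u\in\partial B(u_{1},\rho)$ the triangle inequality gives
\[
t_{0}-2t_{1}=\bigl|\,\Vert u_{1}\Vert-\rho\,\bigr|\le\Vert u\Vert\le\Vert u_{1}\Vert+\rho=t_{0}.
\]
Since $\Phi$ is increasing on $[0,t_{0}]$, and since $I^{(1)}(u)\ge\Phi(\Vert u\Vert)$ by (\ref{2.2}) while $I_{\varepsilon}^{(2)}(u)\ge0$ (because $A,C>0$), it follows that
\[
I_{\varepsilon}(u)\ge\Phi(\Vert u\Vert)\ge\Phi(t_{0}-2t_{1})=c_{0}\qquad\text{for all }u\in\partial B(u_{1},\rho),
\]
which uses no positivity of $u$ on the sphere.

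It then remains to place $u_{1}$ and $u_{2}$ strictly below $c_{0}$. For the far point, $I_{\varepsilon}(t_{2}\psi)\to-\infty$ as $t_{2}\to+\infty$ (the term $-\tfrac{1}{2^{\sharp}}\int_{M}B(t_{2}\psi)^{2^{\sharp}}$ dominates while the two singular integrals tend to $0$), so a large $t_{2}$ yields $I_{\varepsilon}(u_{2})<c_{0}$ with $\Vert u_{2}-u_{1}\Vert=t_{2}-t_{1}>\rho$, i.e. $u_{2}\notin\overline{B}(u_{1},\rho)$. For the center, I would bound $I^{(1)}(u_{1})\le\Psi(t_{1})<\tfrac{1}{2k}\Phi(t_{0})$ via Lemma \ref{lem1}, and control the singular energy by dropping $\varepsilon$ through $(\varepsilon+s^{2})^{\alpha}\ge s^{2\alpha}$:
\[
I_{\varepsilon}^{(2)}(u_{1})\le\frac{1}{2^{\sharp}}\int_{M}\frac{A}{u_{1}^{2^{\sharp}}}\,dv_{g}+\frac{1}{p-1}\int_{M}\frac{C}{u_{1}^{p-1}}\,dv_{g}.
\]
Writing $u_{1}=(t_{1}/\Vert\varphi\Vert)\varphi$ turns these into constant multiples of the singular integrals appearing in (\ref{0.4})–(\ref{0.5}), which those hypotheses bound by the prescribed powers of $S\max_{M}B$; summing gives $I_{\varepsilon}(u_{1})<c_{0}$.

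The only genuine obstacle is this last strict inequality $I_{\varepsilon}(u_{1})<c_{0}=\Phi(t_{0}-2t_{1})$, and it is entirely a matter of exponent bookkeeping. After substituting $t_{0}=(S\max_{M}B)^{-(n-2k)/4k}$ one must check that the right‑hand sides of (\ref{0.4})–(\ref{0.5}) scale exactly like $\Phi(t_{0})=\tfrac{k}{n}t_{0}^{2}$, so that Lemma \ref{lem1}'s contribution $\tfrac{1}{2k}\Phi(t_{0})$ plus the singular part stays below $\Phi\bigl((1-2\theta)t_{0}\bigr)$, a quantity comparable to but slightly smaller than $\Phi(t_{0})$. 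The admissible constant $C(n,p,k)$ is precisely the one for which this total remains below $c_{0}$; verifying that the powers balance and that the inequality is strict and $\varepsilon$‑independent is the technical heart, whereas the barrier estimate and the escape to $-\infty$ are routine.
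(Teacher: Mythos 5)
Your geometry is the same as the paper's (center $u_{1}=t_{1}\varphi $ with $\Vert \varphi \Vert =1$, far point $u_{2}=t_{2}\varphi $, radius $\rho =t_{0}-t_{1}$), and your barrier estimate on the sphere is correct: the triangle inequality gives $\Vert u\Vert \in \lbrack t_{0}-2t_{1},t_{0}]$ on $\partial B(u_{1},\rho )$, so monotonicity of $\Phi $ on $[0,t_{0}]$ together with (\ref{2.2}) and $I_{\varepsilon }^{(2)}\geq 0$ yields $I_{\varepsilon }\geq \Phi (t_{0}-2t_{1})$ there. (This is in fact more careful than the paper, which keeps the level $c_{0}=\Phi (t_{0})$ and verifies the sphere inequality only at the single point $t_{0}\varphi $.) The genuine gap is the step you defer as ``exponent bookkeeping'': the center estimate $I_{\varepsilon }(u_{1})<c_{0}=\Phi (t_{0}-2t_{1})$. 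This is not bookkeeping, and no choice of the constant $C(n,p,k)$ in (\ref{0.4})--(\ref{0.5}) can produce it. Indeed, by (\ref{2.2}) and positivity of the singular part, $I_{\varepsilon }(u_{1})\geq I^{(1)}(u_{1})\geq \Phi (t_{1})$; this lower bound comes entirely from $I^{(1)}$ and is untouched by shrinking $C(n,p,k)$. Since $\Phi $ is increasing on $[0,t_{0}]$, your required inequality forces $t_{1}<t_{0}-2t_{1}$, i.e. $\theta <1/3$.

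But you chose $\theta $ ``as in Lemma \ref{lem1}'', i.e. $\left( a/2\right) ^{2/2^{\natural }}<\theta ^{2}<a^{2/2^{\natural }}=\tfrac{1}{2(n-k)}$, and for low dimensions this window excludes $\theta <1/3$. For example, with $n=3$, $k=1$ one has $2^{\sharp }=6$, $a=1/64$, and the constraint reads $(1/128)^{1/3}\approx 0.198<\theta ^{2}<1/4$, hence $\theta >0.44>1/3$; then $I_{\varepsilon }(u_{1})\geq \Phi (t_{1})>\Phi (t_{0}-2t_{1})=c_{0}$ and your mountain-pass configuration cannot be realized at all, however small the singular integrals are. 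To close the argument you would have to either (i) abandon the lower bound on $\theta $ (it is not used in deriving Lemma \ref{lem1}'s conclusion), take $\theta $ small enough that $\Psi (t_{1})$ plus the two singular terms fits strictly under $\Phi \left( (1-2\theta )t_{0}\right) $, and re-derive an admissible constant $C(n,p,k)$ for that $\theta $ --- this works only when such a $\theta $ is compatible with whatever else the paper needs $\theta $ for; or (ii) keep the level $\Phi (t_{0})$, in which case the real difficulty is exactly the one your triangle-inequality computation exposes (and which the paper's own proof glosses over): on the portion of $\partial B(u_{1},\rho )$ where $\Vert u\Vert <t_{0}$ one has $\Phi (\Vert u\Vert )<\Phi (t_{0})$, and one would need to invoke the largeness of $I_{\varepsilon }^{(2)}$ on functions of small norm to lift the barrier there. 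As written, your proof does not close.
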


\begin{proof}
Following the strategy of the proof in the paper by Hebey-Pacard-Pollack 
\cite{6}, we let $\varphi \in C^{\infty }\left( M\right) $, $\varphi >0$ on $%
M$ and without loss of generality we may assume $\Vert \varphi \Vert =1$. Put%
\begin{equation}
C\left( n,p,k\right) =\left( 2k-1\right) \frac{\theta ^{2^{\sharp }+p}}{4n}%
\leq C_{1}\left( n,k\right) =\frac{\left( 2k-1\right) \theta ^{2^{\sharp }}}{%
4n}\text{.}  \label{2.10}
\end{equation}%
The inequality (\ref{0.4}) becomes%
\begin{equation}
\frac{1}{2^{\sharp }}\int_{M}\frac{A\left( x\right) }{\left( t_{1}\varphi
\right) ^{2^{\sharp }}}dv_{g}\leq \frac{2k-1}{4k}\Phi \left( t_{0}\right) 
\text{.}  \label{2.11}
\end{equation}%
Indeed, we have 
\begin{equation}
\Phi (t_{o})=\frac{k}{n}t_{o}^{2}\text{.}  \label{2.12}
\end{equation}%
and by (\ref{2.10}), we get%
\begin{equation*}
\frac{1}{2^{\natural }}\int_{M}\frac{A(x)}{\left( t_{1}\varphi \right)
^{2^{\sharp }}}dv_{g}\leq \frac{C_{1}\left( n,k\right) }{t_{o}^{2^{\natural
}}\theta ^{2^{\natural }}}\left( S.\max_{M}B\left( x\right) \right) ^{\frac{%
2+2^{\sharp }}{2-2^{\sharp }}}
\end{equation*}%
\begin{equation*}
=\frac{2k-1}{4k}\Phi (t_{o})\text{.}
\end{equation*}%
Analogously, by putting 
\begin{equation*}
C_{2}(n,p,k)=\frac{\left( 2k-1\right) \theta ^{p-1}}{4n}
\end{equation*}%
we obtain 
\begin{equation}
\frac{1}{p-1}\int_{M}\frac{C(x)}{\left( t_{1}\varphi \right) ^{p-1}}%
dv_{g}\leq \frac{2k-1}{4k}\Phi (t_{o})\text{.}  \label{2.13}
\end{equation}%
By relations (\ref{2.5}), (\ref{2.6}), (\ref{2.11}) and (\ref{2.13}), we
infer that 
\begin{equation*}
I_{\epsilon }\left( t_{1}\varphi \right) \leq \Psi \left( \Vert t_{1}\varphi
\Vert \right) +\frac{1}{2^{\sharp }}\int_{M}\frac{A\left( x\right) }{\left(
\varepsilon +\left( t_{1}\varphi \right) ^{2}\right) ^{2^{\flat }}}dv_{g}+%
\frac{1}{p-1}\int_{M}\frac{C(x)}{\left( \varepsilon +\left( t_{1}\varphi
\right) ^{2}\right) ^{\frac{p-1}{2}}}dv_{g}
\end{equation*}%
\begin{equation}
\leq \Psi \left( t_{1}\right) +\frac{1}{2^{\sharp }}\int_{M}\frac{A\left(
x\right) }{\left( \varepsilon +\left( t_{1}\varphi \right) ^{2}\right)
^{2^{\flat }}}dv_{g}+\frac{1}{p-1}\int_{M}\frac{C(x)}{\left( \varepsilon
+\left( t_{1}\varphi \right) ^{2}\right) ^{\frac{p-1}{2}}}dv_{g}\leq \Phi
\left( t_{0}\right) .  \label{2.14}
\end{equation}%
Again from (\ref{2.5}), we deduce that 
\begin{equation*}
I_{\epsilon }\left( t_{0}\varphi \right) \geq \Phi \left( t_{0}\right) +%
\frac{1}{2^{\sharp }}\int_{M}\frac{A\left( x\right) }{\left( \varepsilon
+\left( t_{0}\varphi \right) ^{2}\right) ^{2^{\flat }}}dv_{g}+\frac{1}{p-1}%
\int_{M}\frac{C(x)}{\left( \varepsilon +\left( t_{o}\varphi \right)
^{2}\right) ^{\frac{p-1}{2}}}dv_{g}
\end{equation*}%
and since $A$ and $C$ are assumed with positive values, we obtain%
\begin{equation}
I_{\epsilon }\left( t_{0}\varphi \right) \geq \Phi \left( t_{0}\right) \text{%
.}  \label{2.15}
\end{equation}%
Finally from (\ref{2.14}) and (\ref{2.15}), we get 
\begin{equation*}
I_{\epsilon }\left( t_{1}\varphi \right) <\Phi \left( t_{0}\right) \leq
I_{\epsilon }\left( t_{0}\varphi \right) .
\end{equation*}%
Noting that 
\begin{eqnarray*}
\underset{t\rightarrow +\infty }{\lim }I_{\epsilon }\left( t\varphi \right)
&=&\underset{t\rightarrow +\infty }{\lim }\left[ \frac{1}{2}\Vert t\varphi
\Vert _{P_{g}}^{2}-\frac{1}{2^{\sharp }}\int_{M}\left( B(x)\left( t\varphi
\right) ^{2^{\sharp }}dv_{g}-\frac{A(x)}{\left( \varepsilon +\left( t\varphi
\right) ^{2}\right) ^{2^{\flat }}}\right) dv_{g}\right] \\
&-&\underset{t\rightarrow +\infty }{\lim }\frac{1}{p-1}\int\limits_{M}\frac{%
C(x)}{\left( \varepsilon +\left( t\varphi \right) ^{2}\right) ^{\frac{p-1}{2}%
}}dv_{g} \\
&=&\underset{t\rightarrow +\infty }{\lim }t^{2^{\sharp }}\left( \frac{1}{%
2t^{2^{\sharp }-2}}-\frac{1}{2^{\sharp }}\int_{M}B\left( x\right) \varphi
^{2^{\sharp }}dv\left( g\right) \right)
\end{eqnarray*}%
and since\ $\int_{M}B(x)\varphi ^{2^{\sharp }}dv_{g}>0$, we obtain 
\begin{equation*}
\underset{t\rightarrow +\infty }{\lim }I_{\epsilon }\left( t\varphi \right)
=-\infty .
\end{equation*}%
Consequently there is $t_{2}$ such that 
\begin{equation*}
t_{2}>t_{0}\quad \text{and}\quad I_{\epsilon }\left( t_{2}\varphi \right) <0.
\end{equation*}%
Now, to have the conditions of Lemma \ref{lem2} fulfilled, we just put 
\begin{equation*}
\ \ u_{1}=t_{1}\varphi \text{, }u_{2}=t_{2}\varphi \text{, }u=t_{0}\varphi
\end{equation*}%
and we take $\rho =t_{0}-t_{1}>0$ and $c_{0}=\Phi \left( t_{0}\right) $.
\end{proof}

Lemma \ref{lem1} allows us to apply the Mountain-Pass Lemma to the
functional $I_{\epsilon }$. Let%
\begin{equation*}
C_{\epsilon }=\underset{\gamma \in \Gamma }{\inf }\underset{u\in \gamma }{%
\max }I_{\epsilon }\left( u\right)
\end{equation*}%
where $\Gamma $ denotes the set of paths in $H_{k}^{2}\left( M\right) $
joining the functions $u_{1}=t_{1}\varphi $ and $u_{2}=t_{2}\varphi $.%
\newline
So $C_{\varepsilon }$ is a critical value of $I_{\varepsilon }$ and moreover 
\begin{equation*}
C_{\epsilon }>\Phi \left( t_{0}\right)
\end{equation*}%
and by putting $\gamma \left( t\right) =t\varphi $, for $t\in \left[
t_{1},t_{2}\right] $, we see that $C_{\varepsilon }$ is uniformly bounded
when $\varepsilon $ goes to $0$, so we get%
\begin{equation}
0<\Phi \left( t_{0}\right) <C_{\epsilon }\leq C  \label{2.16}
\end{equation}%
for $\varepsilon $ sufficiently small and $C>0$ not depending on $%
\varepsilon .\newline
$Consequently there exists a sequence $\left( u_{m}\right) _{m}$ of
functions in $H_{k}^{2}\left( M\right) $ such that%
\begin{equation}
I_{\epsilon }\left( u_{m}\right) \underset{m\rightarrow +\infty }{%
\rightarrow }C_{\epsilon }\quad \text{and}\quad DI_{\epsilon }\left(
u_{m}\right) \underset{m\rightarrow +\infty }{\rightarrow }0  \label{2.17}
\end{equation}%
By Lemma \ref{lem2} the sequence $\left( u_{m}\right) _{m\in N}$ of $%
H_{k}^{2}\left( M\right) $ is a Palais-Smale sequence (P-S) for the
functional $I_{\epsilon }$.

\begin{theorem}
\label{theorem4} The Palais-Smale sequence $\left( u_{m}\right) _{m\in N}$
is bounded in $H_{k}^{2}\left( M\right) $ and converges weakly to nontrivial
smooth solution $u_{\varepsilon }$ of equation (\ref{2.1}).
\end{theorem}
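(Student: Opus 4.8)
The plan is to verify the three assertions of the statement in turn: boundedness of the Palais--Smale sequence, identification of its weak limit as a weak solution of (\ref{2.1}), and nontriviality together with smoothness of that limit.

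\textbf{Boundedness.} First I would compute $DI_{\epsilon}$, checking along the way (using $2^{\flat}=2^{\sharp}/2$) that a critical point of $I_{\epsilon}$ solves exactly (\ref{2.1}), and that
\[
DI_{\epsilon}(u)[u]=\Vert u\Vert^{2}-\int_{M}B(u^{+})^{2^{\sharp}}dv_{g}-\int_{M}\frac{A(u^{+})^{2}}{(\varepsilon+(u^{+})^{2})^{2^{\flat}+1}}dv_{g}-\int_{M}\frac{C(u^{+})^{2}}{(\varepsilon+(u^{+})^{2})^{\frac{p+1}{2}}}dv_{g}.
\]
Since $A,C>0$ the two singular integrals are nonnegative, so the combination $I_{\epsilon}(u_{m})-\frac{1}{2^{\sharp}}DI_{\epsilon}(u_{m})[u_{m}]$ cancels the critical term $\int_{M}B(u_{m}^{+})^{2^{\sharp}}dv_{g}$ and, after discarding the remaining nonnegative contributions, leaves the coercive estimate
\[
\frac{k}{n}\Vert u_{m}\Vert^{2}=\Big(\tfrac{1}{2}-\tfrac{1}{2^{\sharp}}\Big)\Vert u_{m}\Vert^{2}\leq I_{\epsilon}(u_{m})-\frac{1}{2^{\sharp}}DI_{\epsilon}(u_{m})[u_{m}].
\]
By (\ref{2.17}) the right-hand side is at most $C_{\epsilon}+o(1)+o(1)\Vert u_{m}\Vert$, and a quadratic bounded by a linear term in $\Vert u_{m}\Vert$ is bounded; hence $\sup_{m}\Vert u_{m}\Vert<\infty$.

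\textbf{Passage to the limit.} Up to a subsequence $u_{m}\rightharpoonup u_{\varepsilon}$ in $H_{k}^{2}(M)$, and by Rellich--Kondrachov $u_{m}\to u_{\varepsilon}$ strongly in every $L^{q}$ with $q<2^{\sharp}$ and almost everywhere. I would then pass to the limit in $DI_{\epsilon}(u_{m})[v]\to 0$ for fixed $v\in H_{k}^{2}(M)$. The linear term $\int_{M}vP_{g}(u_{m})dv_{g}=\langle u_{m},v\rangle$ converges by weak convergence. For fixed $\varepsilon>0$ the functions $t\mapsto t(\varepsilon+t^{2})^{-2^{\flat}-1}$ and $t\mapsto t(\varepsilon+t^{2})^{-(p+1)/2}$ are bounded on $[0,\infty)$, so the two singular terms converge by dominated convergence. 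The only genuinely critical term is $\int_{M}B(u_{m}^{+})^{2^{\sharp}-1}v\,dv_{g}$: here $(u_{m}^{+})^{2^{\sharp}-1}$ is bounded in $L^{2^{\sharp}/(2^{\sharp}-1)}$ and converges a.e., hence weakly in that space, and pairing against $Bv\in L^{2^{\sharp}}$ gives convergence. Thus $DI_{\epsilon}(u_{\varepsilon})[v]=0$ for all $v$, i.e. $u_{\varepsilon}$ is a weak solution of (\ref{2.1}).

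\textbf{Nontriviality -- the main obstacle.} This is the delicate point, precisely because $u\equiv 0$ is itself a solution of the regularized equation (\ref{2.1}); nontriviality therefore cannot come from the equation and must be read off from the energy level. Suppose $u_{\varepsilon}=0$. Then $u_{m}\to 0$ a.e., so $I_{\varepsilon}^{(2)}(u_{m})\to I_{\varepsilon}^{(2)}(0)$ by dominated convergence, while $DI_{\epsilon}(u_{m})[u_{m}]\to 0$ forces $\Vert u_{m}\Vert^{2}$ and $\int_{M}B(u_{m}^{+})^{2^{\sharp}}dv_{g}$ to a common limit $\ell\geq 0$; hence $C_{\epsilon}=\lim I_{\epsilon}(u_{m})=\frac{k}{n}\ell+I_{\varepsilon}^{(2)}(0)\geq I_{\varepsilon}^{(2)}(0)$. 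But by (\ref{2.16}) we have $C_{\epsilon}\leq C$ with $C$ independent of $\varepsilon$, whereas $I_{\varepsilon}^{(2)}(0)=\frac{1}{2^{\sharp}}\int_{M}A\,\varepsilon^{-2^{\flat}}dv_{g}+\frac{1}{p-1}\int_{M}C\,\varepsilon^{-(p-1)/2}dv_{g}\to+\infty$ as $\varepsilon\to 0$. For $\varepsilon$ small this contradicts $C_{\epsilon}\leq C$, so $u_{\varepsilon}\neq 0$. Finally, since $u_{\varepsilon}$ solves (\ref{2.1}) weakly with nonnegative right-hand side $F$, the positive Green function gives $u_{\varepsilon}=G\ast F\geq 0$, and $u_{\varepsilon}\not\equiv 0$ makes $F\not\equiv 0$, so strict positivity of $G$ yields $u_{\varepsilon}>0$ and $u_{\varepsilon}^{+}=u_{\varepsilon}$. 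Smoothness then follows from a standard elliptic bootstrap for the order-$2k$ operator $P_{g}$: a Trudinger/Brezis--Kato iteration absorbs the critical term $Bu_{\varepsilon}^{2^{\sharp}-1}$ to place $u_{\varepsilon}\in L^{\infty}$, after which successive $L^{q}$ and Schauder estimates (using that $A,B,C$ are smooth and $\varepsilon>0$ is fixed) give $u_{\varepsilon}\in C^{\infty}$.
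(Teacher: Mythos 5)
Your proposal is correct and follows essentially the same route as the paper's proof: boundedness by combining the energy level with $DI_{\epsilon}(u_{m})[u_{m}]=o(\left\Vert u_{m}\right\Vert )$, passage to the limit using that the regularized singular terms are uniformly bounded for fixed $\varepsilon$ (dominated convergence) together with weak $L^{\frac{2^{\sharp }}{2^{\sharp }-1}}$ convergence of the critical term, nontriviality for small $\varepsilon$ by playing the uniform bound $C_{\epsilon }\leq C$ from (\ref{2.16}) against the $\varepsilon ^{-2^{\flat }}$ blow-up of the singular energy at the zero function, and an elliptic bootstrap for regularity. The only cosmetic differences are your multiplier $\frac{1}{2^{\sharp }}$ in place of the paper's $\frac{1}{2}$ in the boundedness step (which in fact streamlines it) and your added positivity remark via the Green function, neither of which changes the argument.
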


\begin{proof}
By (\ref{2.17}) we get for any $\varphi \in H_{k}^{\text{ }2}\left( M\right) 
$%
\begin{equation*}
DI_{\epsilon }\left( u_{m}\right) \varphi =o\left( \left\Vert \varphi
\right\Vert \right)
\end{equation*}%
i.e. \ for any $\varphi \in H_{k}^{\text{ }2}\left( M\right) $ one has%
\begin{equation}
\int_{M}\varphi P_{g}u_{m}dv_{g}=\int_{M}B\left( x\right) \left(
u_{m}^{+}\right) ^{2^{\sharp }-1}\varphi dv_{g}  \label{2.18}
\end{equation}%
\begin{equation*}
+\int_{M}\frac{A\left( x\right) u_{m}^{+}\varphi }{\left( \varepsilon
+\left( u_{m}^{+}\right) ^{2}\right) ^{2^{\flat }+1}}dv_{g}+\int_{M}\frac{%
C(x)u_{m}^{+}\varphi }{\left( \varepsilon +\left( u_{m}^{+}\right)
^{2}\right) ^{\frac{p}{2}+1}}dv_{g}+o\left( \left\Vert u_{m}\right\Vert
\right)
\end{equation*}%
in particular, for $\varphi =u_{m}$ we have%
\begin{equation*}
\int_{M}u_{m}P_{g}u_{m}dv_{g}-\int_{M}B(x)\left( u_{m}^{+}\right)
^{2^{\sharp }}dv_{g}=\int_{M}\frac{A(x)\left( u_{m}^{+}\right) ^{2}}{\left(
\varepsilon +\left( u_{m}^{+}\right) ^{2}\right) ^{2^{\flat }+1}}dv_{g}
\end{equation*}%
\begin{equation*}
+\int_{M}\frac{C(x)\left( u_{m}^{+}\right) ^{2}}{\left( \varepsilon +\left(
u_{m}\right) ^{2}\right) ^{\frac{p}{2}+1}}dv_{g}+o\left( \left\Vert
u_{m}\right\Vert \right) \text{.}
\end{equation*}%
or 
\begin{equation*}
-\frac{1}{2}\int_{M}u_{m}P_{g}u_{m}dv_{g}+\frac{1}{2}\int_{M}B(x)\left(
u_{m}^{+}\right) ^{2}dv_{g}
\end{equation*}%
\begin{equation}
+\frac{1}{2}\int_{M}\frac{A(x)\left( u_{m}^{+}\right) ^{2}}{\left(
\varepsilon +\left( u_{m}^{+}\right) ^{2}\right) ^{2^{\flat }+1}}dv_{g}+%
\frac{1}{2}\int_{M}\frac{C(x)\left( u_{m}^{+}\right) ^{2}}{\left(
\varepsilon +\left( u_{m}^{+}\right) ^{2}\right) ^{\frac{p}{2}+1}}%
dv_{g}=o\left( \left\Vert u_{m}\right\Vert \right) \text{.}  \label{2.19}
\end{equation}%
On the other hand it comes from (\ref{2.17}) that%
\begin{equation*}
\frac{1}{2}\int_{M}u_{m}P_{g}u_{m}dv_{g}-\frac{1}{2^{\sharp }}%
\int_{M}B(x)\left( u_{m}^{+}\right) ^{2^{\sharp }}dv_{g}
\end{equation*}%
\begin{equation}
+\frac{1}{2^{\sharp }}\int_{M}\frac{A(x)}{\left( \varepsilon +\left(
u_{m}^{+}\right) ^{2}\right) ^{2^{\flat }}}dv_{g}+\frac{1}{p-1}\int_{M}\frac{%
C(x)}{\left( \varepsilon +\left( u_{m}^{+}\right) ^{2}\right) ^{\frac{p-1}{2}%
}}dv_{g}=C_{\epsilon }+o\left( \left\Vert u_{m}\right\Vert \right) \text{.}
\label{2.20}
\end{equation}%
So by adding (\ref{2.19}) and (\ref{2.20}) we get 
\begin{equation}
\frac{k}{n}\int_{M}B(x)\left( u_{m}^{+}\right) ^{2^{\sharp }}dv_{g}+\frac{1}{%
2}\int_{M}\frac{A(x)\left( u_{m}^{+}\right) ^{2}}{\left( \varepsilon +\left(
u_{m}^{+}\right) ^{2}\right) ^{2^{\flat }+1}}dv_{g}+\frac{1}{2^{\sharp }}%
\int_{M}\frac{A(x)}{\left( \varepsilon +\left( u_{m}^{+}\right) ^{2}\right)
^{2^{\flat }}}dv_{g}  \label{2.21}
\end{equation}%
\begin{equation*}
+\frac{1}{2}\int_{M}\frac{C(x)\left( u_{m}^{+}\right) ^{2}}{\left(
\varepsilon +\left( u_{m}^{+}\right) ^{2}\right) ^{\frac{p}{2}+1}}dv_{g}+%
\frac{1}{p-1}\int_{M}\frac{C(x)}{\left( \varepsilon +\left( u_{m}^{+}\right)
^{2}\right) ^{\frac{p-1}{2}}}dv_{g}=C_{\epsilon }+o\left( \left\Vert
u_{m}\right\Vert \right) \text{.}
\end{equation*}%
For sufficiently large $m$ we deduce that%
\begin{equation*}
\frac{k}{n}\int_{M}B(x)\left( u_{m}^{+}\right) ^{2^{\sharp }}dv_{g}\leq
2C_{\epsilon }+o\left( \left\Vert u_{m}\right\Vert \right)
\end{equation*}%
or%
\begin{equation*}
\frac{1}{2^{\sharp }}\int_{M}B(x)\left( u_{m}^{+}\right) ^{2^{\sharp
}}dv_{g}\leq \frac{n}{2^{\sharp }}C_{\epsilon }+o\left( \left\Vert
u_{m}\right\Vert \right)
\end{equation*}%
and plugging this last inequality with in (\ref{2.20}) we obtain 
\begin{equation*}
\frac{1}{2}\int_{M}u_{m}P_{g}u_{m}dv\left( g\right) \leq C_{\epsilon }+\frac{%
n}{2^{\sharp }}C_{\epsilon }+o\left( \left\Vert u_{m}\right\Vert \right)
\end{equation*}%
\begin{equation*}
\leq nC_{\epsilon }+\frac{n\left( n-2k\right) }{2n}C_{\epsilon }+o\left(
\left\Vert u_{m}\right\Vert\right) \leq 2\left( n-k\right) C_{\epsilon
}+o\left( \left\Vert u_{m}\right\Vert \right) \text{.}
\end{equation*}%
Hence for $m$ large enough%
\begin{equation*}
\int_{M}u_{m}P_{g}u_{m}dv\left( g\right) \leq 4nC_{\epsilon }+o\left(
1\right) \leq 4nC_{\epsilon }+1
\end{equation*}%
i.e.%
\begin{equation}
\left\Vert u_{m}\right\Vert ^{2}\leq 4nC_{\epsilon }+1  \label{2.22}
\end{equation}%
Thus we prove the sequence $\left( u_{m}\right) _{m}$ is bounded in $H_{k}^{%
\text{ }2}\left( M\right) $; so we can extract a subsequence, still denoted $%
\left( u_{m}\right) _{m}$ which verifies:

1. $u_{m}\rightarrow u_{\varepsilon }$ weakly in $H_{k}^{\text{ }2}\left(
M\right) .$

2. $u_{m}\rightarrow u_{\varepsilon }$ strongly in $L^{p}\left( M\right) $, $%
\forall p<\frac{2n}{n-2k}$

3. $u_{m}\rightarrow u_{\varepsilon }$ a.e. in $M.$

4. $\left( u_{m}\right) ^{2^{\sharp }-1}\rightarrow u_{\varepsilon
}^{2^{\sharp }-1}$ weakly in $L^{\frac{2^{\sharp }}{2^{\sharp }-1}}\left(
M\right) $. \newline
Furthermore, putting $g\left( x\right) =\frac{1}{\varepsilon ^{q}},$ where $%
\varepsilon >0$ and $q>0,$ we get by Lebesgue's dominated convergence
theorem that%
\begin{equation*}
\forall k\in 
\mathbb{N}
\text{: }\left( \left( u_{m}^{+}\right) ^{2}+\varepsilon \right)
^{-q}<\varepsilon ^{-q}\text{ \ and \ }\varepsilon ^{-q}\in L^{p}\left(
M\right) \text{ \ }\forall p\geq 1
\end{equation*}%
thus $\left( \left( u_{m}^{+}\right) ^{2}+\varepsilon \right)
^{-q}\rightarrow \left( \left( u_{\epsilon }\right) ^{2}+\varepsilon \right)
^{-q}$\ strongly in $L^{p}\left( M\right) $ \ $\forall p\geq 1$ and with (2)
we infer that $\frac{u_{m}^{+}}{\left( \left( u_{m}^{+}\right)
^{2}+\varepsilon \right) ^{q}}\rightarrow \frac{u_{\epsilon }^{+}}{\left(
\varepsilon +\left( u_{\varepsilon }^{+}\right) ^{2}\right) ^{q}}$ \
strongly in $L^{2}\left( M\right) $. So if we let $m$ go to $+\infty $ in (%
\ref{2.18}) we obtain that $u_{\epsilon }$ is a weak solution of the equation%
\begin{equation}
P_{g}u_{\varepsilon }=B\left( x\right) \left( u_{\varepsilon }^{+}\right)
^{2^{\sharp }-1}+\frac{A\left( x\right) u_{\varepsilon }^{+}}{\left(
\varepsilon +\left( u_{\varepsilon }^{+}\right) ^{2}\right) ^{2^{\flat }+1}}+%
\frac{C(x)u_{\varepsilon }^{+}}{\left( \varepsilon +\left( u_{\varepsilon
}^{+}\right) ^{2}\right) ^{\frac{p+1}{2}}}  \label{2.23}
\end{equation}%
where $2^{b}=\frac{2^{\natural }}{2}$ and $p>1$.

Our solution $u_{\epsilon }$ is not identically zero: indeed by (\ref{2.21}%
), we have 
\begin{equation*}
\frac{1}{2^{\sharp }}\int_{M}\frac{A\left( x\right) }{\left( \varepsilon
+\left( u_{m}\right) ^{2}\right) ^{2^{\flat }}}dv\left( g\right) \leq
C_{\varepsilon }+o(\left\Vert u_{m}\right\Vert )\text{.}
\end{equation*}%
Now, letting $m\rightarrow +\infty $ and taking in mind (\ref{2.16}), we
infer that 
\begin{equation}
\frac{1}{2^{\sharp }}\int_{M}\frac{A\left( x\right) }{\left( \varepsilon
+\left( u_{\varepsilon }^{+}\right) ^{2}\right) ^{2^{\flat }}}dv\left(
g\right) \leq C  \label{2.24}
\end{equation}%
where $C$ is the upper bound of $C_{\varepsilon }$.\newline
Now if for a sequence $\underset{j\rightarrow +\infty }{\varepsilon
_{j}\rightarrow 0}$ ( with $\epsilon _{j}>0,$ $\forall j\in 
\mathbb{N}
$ ); $u_{\varepsilon _{j}}$ goes to $0$, then it follows that%
\begin{equation}
\frac{1}{2^{\sharp }\left( 2^{\sharp }-1\right) \varepsilon _{j}^{2^{\flat }}%
}\int_{M}A\left( x\right) dv\left( g\right) \leq C\text{.}  \label{2.25}
\end{equation}%
So if $j$ $\rightarrow +\infty ,$ it leads to a contradiction since by
assumption $A>0$.

Finally, for sufficiently small $\varepsilon $, $u_{\varepsilon }$ is a
solution not identically zero of the equation (\ref{2.1}).

Now we will show the regularity of $u_{\varepsilon }$. First we write the
equation (\ref{2.23}) in the form

\begin{equation*}
P_{g}u_{\varepsilon }=b(x,u_{\varepsilon })u_{\varepsilon }
\end{equation*}%
where 
\begin{equation*}
b(x,u_{\varepsilon })=B\left( x\right) \left( u_{\varepsilon }^{+}\right)
^{2^{\sharp }-2}+\frac{A\left( x\right) }{\left( \varepsilon +\left(
u_{\varepsilon }^{+}\right) ^{2}\right) ^{2^{\flat }+1}}+\frac{C(x)}{\left(
\varepsilon +\left( u_{\varepsilon }^{+}\right) ^{2}\right) ^{\frac{p+1}{2}}}%
\text{.}
\end{equation*}%
Since $\frac{A}{\left( \varepsilon +\left( u_{\varepsilon }^{+}\right)
^{2}\right) ^{2^{\flat }+1}}+\frac{C}{\left( \varepsilon +\left(
u_{\varepsilon }^{+}\right) ^{2}\right) ^{\frac{p+1}{2}}}\in L^{\infty
}\left( M\right) $ \ and $u_{\varepsilon }\in H_{k}^{2}\left( M\right)
\subset L^{2^{\sharp }}\left( M\right) $, we infer that $b\in L^{\frac{n}{2k}%
}\left( M\right) $. By the work of S. Mazumdar ( see the proof of the
theorem 5 page 28 in \cite{10} ) we obtain that $u_{\varepsilon }\in
L^{p}(M) $ for any $0<p<+\infty $. According to \cite{1}, we obtain that $%
u_{\varepsilon }\in H_{k}^{p}\left( M\right) $ for all $1<p<+\infty $. By
the same arguments as in the proof of proposition 8.3 in \cite{1} we
conclude that $u_{\varepsilon }\in C^{2k,\alpha }(M)$ with $\alpha \in
\left( 0,1\right) $.
\end{proof}

Now we are in position to prove Theorem \ref{theorem1}.

\begin{proof}
From what precedes $u_{\epsilon }$ is a $C^{2k}\left( M\right) $ nontrivial
solution to equation (\ref{2.1}), moreover $u_{\epsilon }$ is a weak limit
of the sequence $\left( u_{k}\right) _{k}$which allows us by the lower
semicontinuity of the norm to write 
\begin{equation*}
\left\Vert u_{\epsilon }\right\Vert \leq \underset{m\rightarrow +\infty }{%
\lim }\inf \left\Vert u_{m}\right\Vert \text{.}
\end{equation*}%
And by the inequalities (\ref{2.16}), (\ref{2.22}) we deduce that the
sequence ($u_{\varepsilon }$)$_{\varepsilon }$ of the $\varepsilon $%
-approximating solutions is bounded in $H_{k}^{2}\left( M\right) $ for
sufficiently small $\epsilon >0$ i.e. 
\begin{equation}
\left\Vert u_{\epsilon }\right\Vert ^{2}\leq 4nC+1  \label{2.26}
\end{equation}%
thus we can extract a subsequence still labelled $\left( u_{m}\right) _{m}$
satisfying:

i) $u_{m}\longrightarrow u$ weakly in $H_{k}^{2}\left( M\right) $

ii) $u_{m}\longrightarrow u$ strongly in $L^{p}\left( M\right) $\ for $\
p<2^{\sharp }$

iii) $u_{m}\longrightarrow u$ a.e. in $M$.

vi) $u_{m}^{2^{\sharp }-1}\longrightarrow u^{2^{\sharp }-1}$weakly in $L^{%
\frac{2^{\sharp }}{2^{\sharp }-1}}.$\newline

Furthermore the sequence $(u_{m})_{m}$ is bounded below: indeed as the
functions $u_{k}$ are continuous, denote by $x_{m}$ their respective
maximums on $M$ $\ $and put $x_{o}=\lim x_{m}$ ( a subsequence of $\left(
x_{m}\right) _{m}$ still labelled $(x_{m})_{m}$ ). Since by assumption the
operator $P_{g}$ admits a positive Green function, then we can write%
\begin{equation*}
u_{m}(x_{m})=\int_{M}G\left( x_{m},y\right) \left( B\left( y\right) \left(
u_{m}^{+}\left( y\right) \right) ^{2^{\sharp }-1}+\frac{A\left( y\right)
u_{m}^{+}\left( y\right) }{\left( \varepsilon +\left( u_{m}^{+}\left(
y\right) \right) ^{2}\right) ^{2^{\flat }+1}}+\frac{C(y)u_{m}^{+}(y)}{\left(
\varepsilon +\left( u_{m}^{+}\left( y\right) \right) ^{2}\right) ^{\frac{p+1%
}{2}}}\right) dv_{g}
\end{equation*}%
and by Fatou's lemma, we get%
\begin{equation*}
\lim \inf_{m}u_{m}(x_{m})\geq
\end{equation*}%
\begin{equation*}
\int_{M}\lim \inf_{m}\left[ G\left( x_{m},y\right) \left( B\left( y\right)
\left( u_{m}^{+}\left( y\right) \right) ^{2^{\sharp }-1}+\frac{A\left(
y\right) u_{m}^{+}\left( y\right) }{\left( \varepsilon +\left(
u_{m}^{+}\left( y\right) \right) ^{2}\right) ^{2^{\flat }+1}}+\frac{%
C(y)u_{m}^{+}\left( y\right) }{\left( \varepsilon +\left( u_{m}^{+}\left(
y\right) \right) ^{2}\right) ^{\frac{p+1}{2}}}\right) \right] dv_{g}
\end{equation*}%
\begin{equation*}
=\int_{M}\lim \inf_{m}G\left( x_{m},y\right) \left( B\left( y\right) \left(
u^{+}\left( y\right) \right) ^{2^{\sharp }-1}+\frac{A\left( y\right)
u^{+}\left( y\right) }{\left( \varepsilon +\left( u^{+}\left( y\right)
\right) ^{2}\right) ^{2^{\flat }+1}}+\frac{C(y)u^{+}\left( y\right) }{\left(
\varepsilon +\left( u^{+}\left( y\right) \right) ^{2}\right) ^{\frac{p+1}{2}}%
}\right) dv_{g}.
\end{equation*}%
And since the functions $A$, $B$, $C$ are positive, then $\lim
\inf_{m}u_{m}(x_{m})=0$ implies that $u^{+}=0$. This contradicts relation (%
\ref{2.25}). Thus, there exists $\delta >0$, such that $u_{m}\geq \delta $.
We can once again use Lebesgue's dominated convergence theorem to get 
\begin{equation*}
\frac{1}{\left( \varepsilon _{m}+\left( u_{m}\right) ^{2}\right) ^{q}}%
\rightarrow \frac{1}{\left( u\right) ^{2q}}\text{ strongly in }L^{p}\left(
M\right) \text{, }\forall p\geq 1\text{, }\forall q\geq 1\text{.}
\end{equation*}%
Since for $m$ large enough $u_{m}>0$ there is $\tilde{\varepsilon}>0$ such
that 
\begin{equation*}
\text{\ \ \ }\frac{1}{\left( \varepsilon _{m}+u_{m}^{2}\right) ^{q}}\leq 
\frac{1}{\tilde{\varepsilon}^{q}}\text{ with }q>0\text{.}
\end{equation*}%
Thus by Lebesgue's dominated convergence theorem, we infer that%
\begin{equation*}
\frac{1}{\left( \varepsilon _{m}+\left( u_{m}\right) ^{2}\right) ^{q}}%
\longrightarrow \frac{1}{u^{2q}}\text{ strongly in }L^{p}\left( M\right) 
\text{, \ }\forall p\geq 1,\forall q>0.
\end{equation*}%
Finally, with ii), it follows that%
\begin{equation*}
\ \frac{u_{k}}{\left( \varepsilon _{m}+\left( u_{m}\right) ^{2}\right)
^{2^{\flat }+1}}\longrightarrow \frac{1}{u^{2^{\sharp }+1}}\text{\ strongly
in }L^{2}\left( M\right)
\end{equation*}%
with $u>0$. Letting $\varepsilon _{m}\rightarrow 0$ in (\ref{2.23}) as $%
m\rightarrow +\infty ,$we get that $u$ is a weak psitive solution of
equation (\ref{0.1}). By the same reasoning as that of the regularity of the
solution $u_{\varepsilon }$ of the equation (\ref{2.23}) we obtain that $%
u\in C^{2k,\alpha }\left( M\right) $ with $\alpha \in \left( 0,1\right) $.
Since $u>0$, the right-hand-side of (\ref{0.1}) has the same regularity as $%
u $ and by successive iterations we obtain that $u\in C^{\infty }\left(
M\right) $.
\end{proof}

\section{Existence of a second solution}

According to the previous section our functional admits a local maximum $%
C_{\varepsilon }$, this means the following inequalities 
\begin{equation*}
I_{\varepsilon }(t_{1}\varphi )<\Phi (t_{0})<I_{\varepsilon }(t_{0}\varphi
)\leq C_{\varepsilon }
\end{equation*}%
where $t_{0}$, $t_{1}$ are real numbers satisfying $0<t_{1}<t_{0}$ and $%
\varphi \in C^{\infty }(M)$ with $\varphi >0$ and $\left\Vert \varphi
\right\Vert =1$.

On the other hand and as $I_{\varepsilon }(t\varphi )$ tends to $-\infty $
as $t$ goes to $+\infty $, there is $t_{2}>>t_{0}$ such that $I_{\varepsilon
}(t_{2}\varphi )<0$. Now if we let $t$ and $\varepsilon $ tend both to $%
0^{+} $, the functional $I_{\varepsilon }$ goes to $+\infty $. Indeed, 
\begin{eqnarray*}
\lim\limits_{t\rightarrow 0^{+}}\left[ \lim\limits_{\varepsilon \rightarrow
0^{+}}I_{\varepsilon }(t.\varphi )\right] &=&\lim\limits_{t\rightarrow 0^{+}}%
\left[ I^{(1)}(t.\varphi )+I_{0}^{(2)}(t.\varphi )\right] \\
&=&\lim\limits_{t\rightarrow 0^{+}}\int\limits_{M}\left[ (t.\varphi
)P_{g}(t.\varphi )-\frac{2}{2^{\sharp }}B(x)(t.\varphi )^{2^{\sharp }}\right]
dv(g) \\
&+&\lim\limits_{t\rightarrow 0^{+}}\left[ \frac{1}{2^{\sharp }}%
\int\limits_{M}\frac{A(x)}{\left( t.\varphi \right) ^{2^{\sharp }}}dv(g)+%
\frac{1}{p-1}\int\limits_{M}\frac{C(x)}{\left( t.\varphi \right) ^{p-1}}%
dv(g).\right] \\
&=&+\infty .
\end{eqnarray*}%
and it follows that for $\varepsilon $ small enough, there is near $0$ a
real number $0<t^{\prime }<<t_{1}$ such that $I_{\varepsilon }(t^{\prime
}\varphi )>\Phi (t_{0}>I_{\varepsilon }\left( t_{1}\varphi \right) $. What
let us see, from this fact, that our function has a local lower bound. We
will give the necessary conditions for this lower bound to exist, then we
show by Ekeland's lemma that this lower bound is reached.

We will need the following version of the Ekeland's lemma (see \cite{11})

\begin{lemma}
Let $V$ be a Banach space, $J$ be a $C^{1}$ lower bounded function on a
closed subset $F$ of $V$ and $c=\inf_{F}J$. Let $u_{\varepsilon }$ $\in F$
such that $c\leq J(u_{\varepsilon })\leq c+\varepsilon .$ Then there is $%
\overline{u}_{\varepsilon }\in F$ such that%
\begin{equation*}
\left\{ 
\begin{array}{c}
c\leq J(\overline{u}_{\varepsilon })\leq c+\varepsilon \\ 
\left\Vert \overline{u}_{\varepsilon }-u_{\varepsilon }\right\Vert _{V}\leq 2%
\sqrt{\varepsilon } \\ 
\forall u\in F\text{, }u\neq \overline{u}_{\varepsilon }\text{, }J(u)-J(%
\overline{u}_{\varepsilon })+\sqrt{\varepsilon }\left\Vert u-\overline{u}%
_{\varepsilon }\right\Vert _{V}>0\text{.}%
\end{array}%
\right.
\end{equation*}%
If moreover, $\overline{u}_{\varepsilon }$\ is in the interior of $F$, then 
\begin{equation*}
\left\Vert DJ(\overline{u}_{\varepsilon })\right\Vert _{V^{\prime }}\leq 
\sqrt{\varepsilon }\text{.}
\end{equation*}
\end{lemma}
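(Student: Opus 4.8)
The plan is to construct $\overline{u}_{\varepsilon}$ by an iterative descent scheme adapted to the perturbed functional, rather than by a Zorn's lemma argument, so that only the completeness of $V$ and the $C^{1}$ (hence continuous) character of $J$ are used. Writing $\lambda=\sqrt{\varepsilon}$, I would attach to each $v\in F$ the set
\[
S(v)=\{\,w\in F:\ J(w)+\lambda\|w-v\|_{V}\le J(v)\,\},
\]
which always contains $v$ and is closed because $J$ and the norm are continuous and $F$ is closed. I would then build a sequence by setting $u_{0}=u_{\varepsilon}$ and, given $u_{n}$, choosing $u_{n+1}\in S(u_{n})$ that halves the gap to the infimum of $J$ over $S(u_{n})$, i.e. $J(u_{n+1})\le\tfrac12\bigl(J(u_{n})+\inf_{S(u_{n})}J\bigr)$.

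The structural facts I would establish are two. First, the relation $w\preceq v\iff w\in S(v)$ is a partial order: antisymmetry and transitivity follow from the triangle inequality, and transitivity forces the nesting $S(u_{n+1})\subset S(u_{n})$. Second, since $u_{n+1}\in S(u_{n})$ one has $\lambda\|u_{n+1}-u_{n}\|_{V}\le J(u_{n})-J(u_{n+1})$, while $\bigl(J(u_{n})\bigr)_{n}$ is non-increasing and bounded below by $c$; summing the telescoping bound gives $\sum_{n}\|u_{n+1}-u_{n}\|_{V}<\infty$, so $(u_{n})$ is Cauchy and, by completeness of $V$ and closedness of $F$, converges to some $\overline{u}_{\varepsilon}\in F$. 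Passing to the limit in $u_{m}\preceq u_{n}$ (for $m\ge n$) and using closedness of $S(u_{n})$ yields $\overline{u}_{\varepsilon}\preceq u_{n}$ for every $n$.

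The first two conclusions then follow quickly: $c\le J(\overline{u}_{\varepsilon})\le J(u_{0})\le c+\varepsilon$, and from $\overline{u}_{\varepsilon}\preceq u_{0}$ one gets $\lambda\|\overline{u}_{\varepsilon}-u_{\varepsilon}\|_{V}\le J(u_{\varepsilon})-J(\overline{u}_{\varepsilon})\le\varepsilon$, so $\|\overline{u}_{\varepsilon}-u_{\varepsilon}\|_{V}\le\sqrt{\varepsilon}\le 2\sqrt{\varepsilon}$. The \emph{main obstacle} is the third (strict minimality) conclusion, which I would argue by contradiction. If some $w\ne\overline{u}_{\varepsilon}$ satisfied $J(w)+\lambda\|w-\overline{u}_{\varepsilon}\|_{V}\le J(\overline{u}_{\varepsilon})$, i.e. $w\preceq\overline{u}_{\varepsilon}$, then transitivity with $\overline{u}_{\varepsilon}\preceq u_{n}$ would put $w\in S(u_{n})$ for every $n$, whence $J(w)\ge\inf_{S(u_{n})}J$. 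The halving choice, together with the monotonicity $\inf_{S(u_{n})}J\le\inf_{S(u_{n+1})}J$ and a squeeze from $2J(u_{n+1})-J(u_{n})\le\inf_{S(u_{n})}J\le J(u_{n+1})$, forces $\inf_{S(u_{n})}J\to\lim_{n}J(u_{n})=J(\overline{u}_{\varepsilon})$. Hence $J(w)\ge J(\overline{u}_{\varepsilon})$, which combined with $w\preceq\overline{u}_{\varepsilon}$ gives $\lambda\|w-\overline{u}_{\varepsilon}\|_{V}\le 0$, contradicting $w\ne\overline{u}_{\varepsilon}$.

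Finally, when $\overline{u}_{\varepsilon}$ lies in the interior of $F$, the gradient estimate is a routine consequence of the strict inequality just proved. For any unit vector $h\in V$ and small $t>0$ the point $\overline{u}_{\varepsilon}+th$ stays in $F$, so the third conclusion gives $J(\overline{u}_{\varepsilon}+th)-J(\overline{u}_{\varepsilon})\ge-\lambda t$; dividing by $t$, letting $t\to 0^{+}$, and repeating with $-h$ in place of $h$ yields $|DJ(\overline{u}_{\varepsilon})h|\le\lambda=\sqrt{\varepsilon}$, hence $\|DJ(\overline{u}_{\varepsilon})\|_{V'}\le\sqrt{\varepsilon}$. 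Thus the crux of the whole argument is the contradiction establishing strict minimality; the descent construction and the derivative step are essentially bookkeeping once the nesting and halving are in place.
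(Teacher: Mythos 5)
Your proof is correct, but note that the paper does not actually prove this lemma at all: it is quoted as a known result, with a pointer to Meghea's monograph (reference \cite{11}), and is then applied as a black box. So there is nothing in the paper to compare step-by-step against; what you have written is the standard constructive proof of Ekeland's variational principle (iterative descent with the sets $S(v)=\{w\in F:\ J(w)+\lambda\|w-v\|_{V}\le J(v)\}$ and the halving selection), and it is complete. All the key steps check out: antisymmetry and transitivity of $\preceq$ follow from the triangle inequality; transitivity gives the nesting $S(u_{n+1})\subset S(u_{n})$; the telescoping bound $\lambda\sum_{n}\|u_{n+1}-u_{n}\|_{V}\le J(u_{0})-c\le\varepsilon$ gives the Cauchy property; closedness of each $S(u_{n})$ passes the order to the limit; and the squeeze $2J(u_{n+1})-J(u_{n})\le\inf_{S(u_{n})}J\le J(u_{n+1})$ forces $\inf_{S(u_{n})}J\to J(\overline{u}_{\varepsilon})$ (here you use continuity of $J$ to identify $\lim_{n}J(u_{n})=J(\overline{u}_{\varepsilon})$, which is available since $J$ is $C^{1}$; the general lower-semicontinuous version needs a small extra argument, but that is irrelevant under the lemma's hypotheses). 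Two minor remarks: you should say a word on why the halving selection is possible, namely that $\inf_{S(u_{n})}J\ge c>-\infty$ because $S(u_{n})\subset F$ and $J$ is bounded below on $F$; and your argument actually yields the stronger distance bound $\|\overline{u}_{\varepsilon}-u_{\varepsilon}\|_{V}\le\sqrt{\varepsilon}$, which of course implies the stated $2\sqrt{\varepsilon}$ (the factor $2$ in the statement reflects a different normalization in some formulations, e.g.\ splitting the budget $\varepsilon$ between the descent and the distance). The final gradient estimate, testing the strict minimality inequality with $u=\overline{u}_{\varepsilon}\pm th$ and letting $t\to0^{+}$, is exactly the right way to exploit interiority, and is what the paper implicitly relies on when it asserts $\left\Vert DJ(\overline{u}_{\varepsilon})\right\Vert_{V^{\prime}}\le\sqrt{\varepsilon}$ for interior points.
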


We can consider the sequence ( $u_{\varepsilon }$ )$_{\varepsilon }$ in the
interior of $F$. Indeed if $u_{\varepsilon }$ is on the border of $F$ then
by the continuity of $J$ there is $\overline{u}_{\varepsilon }$ belonging to
interior of $F$ such that $\left\vert J(\overline{u}_{\varepsilon
})-J(u_{\varepsilon })\right\vert <\varepsilon $. Which gives, for $%
\varepsilon $ \ sufficiently, $c-\varepsilon <J(\overline{u}_{\varepsilon
})<c+2\varepsilon $ and $J(u)-J(\overline{u}_{\varepsilon })+\sqrt{%
\varepsilon }\left\Vert u-\overline{u}_{\varepsilon }\right\Vert
_{V}=J(u)-J(u_{\varepsilon })+J(u_{\varepsilon })-J(\overline{u}%
_{\varepsilon })+\sqrt{\varepsilon }\left\Vert u-u_{\varepsilon
}+u_{\varepsilon }-\overline{u}_{\varepsilon }\right\Vert _{V}$

$\geq J(u)-J(u_{\varepsilon })-\varepsilon +\sqrt{\varepsilon }\left\Vert
u-u_{\varepsilon }\right\Vert _{V}-\sqrt{\varepsilon }\left\Vert
u_{\varepsilon }-\overline{u}_{\varepsilon }\right\Vert
_{V}>J(u)-J(u_{\varepsilon })+\sqrt{\varepsilon }\left\Vert u-u_{\varepsilon
}\right\Vert _{V}-2\varepsilon >0$. So we can speak about the differential $%
DJ(u_{\varepsilon })$.

Before beginning the proof of the Theorem \ref{theorem2}, we will establish
some preliminary lemmas.

\begin{lemma}
\label{lem3} Let $\theta >0$ such that 
\begin{equation}
\left( \frac{a}{2}\right) ^{\frac{2}{2^{\sharp }}}<\theta ^{2}<a^{\frac{2}{%
2^{\natural }}}  \label{3.1}
\end{equation}%
where 
\begin{equation*}
a=\frac{1}{\left( 2\left( n-k\right) \right) ^{\frac{2^{\natural }}{2}}}
\end{equation*}%
and put 
\begin{equation*}
t_{3}=\left( \frac{a}{8}\right) ^{\frac{1}{2^{\sharp }}}t_{0}
\end{equation*}%
then we have the following inequality 
\begin{equation}
\Phi (t_{3})>\frac{a}{8}\Phi (t_{0}).  \label{3.2}
\end{equation}
\end{lemma}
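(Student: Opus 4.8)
The plan is to substitute the explicit value of $t_3$ into the definition of $\Phi$ and collapse everything to one elementary inequality, exploiting the normalization identity $\left(S\max_M B\right)t_0^{2^{\sharp}}=t_0^2$ from (\ref{2.6}), exactly as was done in Lemma \ref{lem1}.

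First I would set $\lambda=\frac{a}{8}$, so that $t_3=\lambda^{1/2^{\sharp}}t_0$, whence $t_3^{2^{\sharp}}=\lambda\,t_0^{2^{\sharp}}$ and $t_3^2=\lambda^{2/2^{\sharp}}t_0^2$. Plugging these into $\Phi(t)=\frac{1}{2}t^2-\frac{1}{2^{\sharp}}\left(S\max_M B\right)t^{2^{\sharp}}$ and using (\ref{2.6}) to eliminate the factor $\left(S\max_M B\right)t_0^{2^{\sharp}}$, I obtain
\[
\Phi(t_3)=t_0^2\left(\frac{1}{2}\lambda^{2/2^{\sharp}}-\frac{\lambda}{2^{\sharp}}\right).
\]

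Since $\Phi(t_0)=\frac{k}{n}t_0^2$ by (\ref{2.4}), the claimed inequality $\Phi(t_3)>\lambda\,\Phi(t_0)$ is, after dividing by $t_0^2>0$, equivalent to $\frac{1}{2}\lambda^{2/2^{\sharp}}>\lambda\left(\frac{k}{n}+\frac{1}{2^{\sharp}}\right)$. At this point I would record the arithmetic identity $\frac{k}{n}+\frac{1}{2^{\sharp}}=\frac{k}{n}+\frac{n-2k}{2n}=\frac{1}{2}$, which reduces the target to the single inequality $\lambda^{2/2^{\sharp}}>\lambda$.

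The remaining step, which is the only spot where a genuine inequality (rather than an identity) enters, is elementary: because $n>2k$ forces $2^{\sharp}=\frac{2n}{n-2k}>2$, the exponent satisfies $\frac{2}{2^{\sharp}}<1$; and because $n-k\geq k+1\geq 2$ one has $a=\left(2(n-k)\right)^{-2^{\natural}/2}<1$, so $\lambda=\frac{a}{8}\in(0,1)$. For a base in $(0,1)$ raised to a power in $(0,1)$ the value strictly increases, giving $\lambda^{2/2^{\sharp}}>\lambda$ and hence the lemma. I expect no serious obstacle here; the only things to get right are the bookkeeping between $2^{\sharp}$ and $2^{\natural}$ (which denote the same exponent in this estimate) and the verification that $\lambda<1$, which is what makes the power inequality point in the required direction. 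It is worth noting that the hypothesis bounding $\theta^2$ is not actually used in this particular estimate and is carried along only for its role elsewhere in the proof of Theorem \ref{theorem2}.
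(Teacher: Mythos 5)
Your proof is correct and follows essentially the same route as the paper: both substitute $t_{3}=(a/8)^{1/2^{\sharp}}t_{0}$ into $\Phi$, eliminate $S\max_{M}B$ via (\ref{2.6}), compare against $\Phi(t_{0})=\frac{k}{n}t_{0}^{2}$, and reduce the claim to the single elementary inequality $(a/8)^{2/2^{\sharp}}>a/8$, which is exactly the paper's inequality (\ref{3.3}). Your write-up is in fact slightly more complete, since you justify $(a/8)^{2/2^{\sharp}}>a/8$ (via $0<a/8<1$ and $0<2/2^{\sharp}<1$) where the paper merely asserts it, and you correctly note that the bound on $\theta$ plays no role in this particular estimate.
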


\begin{proof}
Since ($t_{1}$ being defined as in lemma \ref{lem1} ) 
\begin{equation*}
t_{3}=\left( \frac{a}{8}\right) ^{\frac{1}{2^{\sharp }}}t_{0}<\theta
t_{0}=t_{1}
\end{equation*}%
and 
\begin{equation}
\left( \frac{a}{8}\right) ^{\frac{2}{2^{\sharp }}}>\frac{a}{8}.  \label{3.3}
\end{equation}%
by (\ref{3.2}), we get 
\begin{align*}
\Phi (t_{3})& =\frac{1}{2}t_{3}^{2}-\left( S.\max_{M}B(x)\right) \frac{%
t_{3}^{2^{\sharp }}}{2^{\sharp }} \\
& =\frac{1}{2}\left[ \left( \frac{a}{8}\right) ^{\frac{1}{2^{\sharp }}}t_{0}%
\right] ^{2}-\frac{1}{2^{\sharp }}t_{0}^{2}\frac{a}{8} \\
& =\frac{n}{k}.\left[ \frac{1}{2}\left( \frac{a}{8}\right) ^{\frac{2}{%
2^{\sharp }}}-\ \frac{a}{8}.\frac{n-2k}{2n}\right] \frac{k}{n}t_{0}^{2}
\end{align*}%
Knowing by (\ref{2.4}) that 
\begin{equation*}
\frac{k}{n}t_{0}^{2}=\Phi (t_{0})
\end{equation*}%
we deduce 
\begin{align*}
\Phi (t_{3})& =\left[ \frac{n}{2k}\left( \left( \frac{a}{8}\right) ^{\frac{2%
}{2^{\sharp }}}-\frac{a}{8}\right) +\frac{a}{8}\right] \Phi (t_{0}) \\
& >\frac{a}{8}\Phi (t_{0}).
\end{align*}%
Where we used the inequality (\ref{3.3}) in the last line.
\end{proof}

\begin{lemma}
\label{lem4} Given a Riemannian compact manifold $(M,g)$ of dimension $%
n>2k,\;k\in \mathbb{N}^{\ast }$ and $3<p<2^{\sharp }+1$. \newline
If 
\begin{equation}
\int_{M}Q_{g}dv_{g}\neq k(n-1)\omega _{n}  \label{3.4}
\end{equation}%
where $\omega _{n}$ is the volume of the round sphere; then there is a
constant $\lambda ^{\ast }>0$ such that: $\forall \varepsilon \in \left]
0,\lambda ^{\ast }\right[ $ the following inequality take place 
\begin{equation}
\int\limits_{M}^{{}}\frac{A(x)}{\left( \varepsilon +(t_{3}.\varphi
)^{2}\right) ^{\frac{2^{\sharp }}{2}}}dv_{g}\geq \frac{8-a}{a}\left(
\int\limits_{M}^{{}}\sqrt{A(x)}dv_{g}\right) ^{2}\frac{(n-2k)Q_{\tilde{g}}}{%
2t_{0}^{2^{\sharp }}}  \label{3.5}
\end{equation}

where $t_{3},a$ are chosen as in lemma \ref{lem3} .
\end{lemma}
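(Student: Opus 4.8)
The plan is to let $\varepsilon\to 0^{+}$ in the left-hand side of (\ref{3.5}) and thereby reduce the statement to a single \emph{strict} inequality at $\varepsilon=0$, which then propagates to a whole interval $\left]0,\lambda^{\ast}\right[$ by continuity. Since $\varphi>0$ is smooth on the compact manifold $M$, it is bounded below by a positive constant, so $x\mapsto A(x)/\varphi(x)^{2^{\sharp}}$ is bounded and the map
\[
\varepsilon\longmapsto g(\varepsilon):=\int_{M}\frac{A(x)}{\left(\varepsilon+(t_{3}\varphi)^{2}\right)^{2^{\sharp}/2}}\,dv_{g}
\]
is finite, continuous and nonincreasing on $[0,\infty)$, with $g(\varepsilon)\to g(0)=t_{3}^{-2^{\sharp}}\int_{M}A\,\varphi^{-2^{\sharp}}\,dv_{g}$ as $\varepsilon\to 0^{+}$ (dominated convergence, the limiting integrand being integrable). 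Hence it suffices to show that $g(0)$ strictly exceeds the constant right-hand side of (\ref{3.5}); the set $\{\varepsilon\ge 0:g(\varepsilon)>\text{RHS}\}$ is then an open neighbourhood of $0$, from which a threshold $\lambda^{\ast}>0$ is read off.

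To bound $g(0)$ I would first apply the Cauchy--Schwarz inequality,
\[
\left(\int_{M}\sqrt{A}\,dv_{g}\right)^{2}=\left(\int_{M}\frac{\sqrt{A}}{\varphi^{2^{\sharp}/2}}\,\varphi^{2^{\sharp}/2}\,dv_{g}\right)^{2}\le \int_{M}\frac{A}{\varphi^{2^{\sharp}}}\,dv_{g}\cdot\int_{M}\varphi^{2^{\sharp}}\,dv_{g},
\]
so that $\int_{M}A\,\varphi^{-2^{\sharp}}\,dv_{g}\ge\bigl(\int_{M}\sqrt{A}\,dv_{g}\bigr)^{2}\big/\int_{M}\varphi^{2^{\sharp}}\,dv_{g}$. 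The hypothesis $\int_{M}Q_{g}\,dv_{g}\neq k(n-1)\omega_{n}$ excludes the round-sphere borderline case and therefore guarantees, under the standing coercivity assumption, a conformal metric $\tilde g=\varphi^{4/(n-2k)}g$ of constant $Q$-curvature $Q_{\tilde g}$; after a constant rescaling of $\varphi$ we keep the normalization $\Vert\varphi\Vert=1$. Taking $u=1$ in the conformal covariance law (\ref{0.0}) gives $P_{g}\varphi=\tfrac{n-2k}{2}Q_{\tilde g}\,\varphi^{(n+2k)/(n-2k)}$, whence
\[
1=\Vert\varphi\Vert^{2}=\int_{M}\varphi\,P_{g}\varphi\,dv_{g}=\frac{n-2k}{2}\,Q_{\tilde g}\int_{M}\varphi^{2^{\sharp}}\,dv_{g}.
\]
This identifies $\int_{M}\varphi^{2^{\sharp}}\,dv_{g}=2/\bigl((n-2k)Q_{\tilde g}\bigr)$ and in particular forces $Q_{\tilde g}>0$. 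Combining it with the Cauchy--Schwarz bound yields $\int_{M}A\,\varphi^{-2^{\sharp}}\,dv_{g}\ge\bigl(\int_{M}\sqrt{A}\,dv_{g}\bigr)^{2}\,(n-2k)Q_{\tilde g}/2$.

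Finally I would substitute $t_{3}$ from Lemma \ref{lem3}: since $t_{3}=(a/8)^{1/2^{\sharp}}t_{0}$ we have $t_{3}^{2^{\sharp}}=(a/8)\,t_{0}^{2^{\sharp}}$, hence
\[
g(0)=\frac{8}{a\,t_{0}^{2^{\sharp}}}\int_{M}\frac{A}{\varphi^{2^{\sharp}}}\,dv_{g}\ \ge\ \frac{8}{a}\left(\int_{M}\sqrt{A}\,dv_{g}\right)^{2}\frac{(n-2k)Q_{\tilde g}}{2\,t_{0}^{2^{\sharp}}}.
\]
Because $a>0$ one has $8/a>(8-a)/a$, so the last expression is strictly larger than $\frac{8-a}{a}\bigl(\int_{M}\sqrt{A}\,dv_{g}\bigr)^{2}(n-2k)Q_{\tilde g}/(2t_{0}^{2^{\sharp}})$, which is exactly the right-hand side of (\ref{3.5}). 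Thus $g(0)>\text{RHS}$ strictly, and the continuity argument of the first paragraph delivers the required $\lambda^{\ast}$. The only genuinely delicate point is the middle step, namely invoking the constant-$Q$-curvature representative whose existence and positivity the hypothesis $\int_{M}Q_{g}\,dv_{g}\neq k(n-1)\omega_{n}$ secures; once the normalized $\varphi$ with constant $Q_{\tilde g}>0$ is in hand, the remaining estimates are elementary. The restriction $3<p<2^{\sharp}+1$ plays no role in this lemma and is merely inherited from the ambient theorem.
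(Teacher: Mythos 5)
Your argument is correct, and it shares the paper's two essential ingredients --- the Cauchy--Schwarz/H\"older bound $\left(\int_M\sqrt{A}\,dv_g\right)^2\le\int_M A\varphi^{-2^{\sharp}}dv_g\cdot\int_M\varphi^{2^{\sharp}}dv_g$ and the normalization $\int_M\varphi^{2^{\sharp}}dv_g=2/\left((n-2k)Q_{\tilde g}\right)$ coming from the constant-$Q$-curvature conformal metric of \cite{3} under hypothesis (\ref{3.4}) --- but it handles the perturbation parameter $\varepsilon$ by a genuinely different (and lighter) mechanism. The paper keeps $\varepsilon$ inside the H\"older estimate, controls $\Vert\varepsilon+(t_3\varphi)^2\Vert_{2^{\sharp}/2}$ by Minkowski's inequality, and then chooses the explicit threshold $\lambda^{\ast}=\min(\beta_1,\beta_2)$, with $\beta_1$ as in (\ref{3.6}), calibrated exactly so that the $\varepsilon$-contribution costs no more than the factor $8/(8-a)$; this is what turns the constant $8/a$ you get at $\varepsilon=0$ into the stated $(8-a)/a$. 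You instead evaluate at $\varepsilon=0$, keep the better constant $8/a$, and convert the strict surplus $8/a>(8-a)/a$ into an interval $\left]0,\lambda^{\ast}\right[$ by monotonicity and dominated convergence, avoiding Minkowski altogether --- at the price of a non-explicit $\lambda^{\ast}$. That explicitness is not needed for the lemma as stated, but note that the paper reuses it: in the third step of the proof of Theorem \ref{theorem2} the condition $\epsilon\le\beta_2$ is invoked for a parallel estimate with $t_4=a^{1/2^{\sharp}}t_0$, so the paper's $\lambda^{\ast}$ does double duty that your soft threshold cannot. Two minor points in your favour: your remark that $Q_{\tilde g}>0$ is forced by coercivity and $\Vert\varphi\Vert=1$ (rather than ``supposed positive'', as the paper puts it) is right, and so is your observation that the restriction $3<p<2^{\sharp}+1$ plays no role in this lemma.
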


\begin{proof}
Let $\varphi \in C^{\infty }(M),\varphi >0$ in $M$ with $\Vert \varphi \Vert
=1$. Put 
\begin{equation}
\beta _{1}=\left[ \left( \frac{8}{8-a}\right) ^{\frac{2}{2^{\sharp }}}-1%
\right] \frac{\Omega _{1}}{V(M)^{\frac{2}{2^{\sharp }}}}  \label{3.6}
\end{equation}%
and 
\begin{equation}
\beta _{2}=\left[ \left( \frac{1}{a}\right) ^{\left( \frac{2}{2^{\sharp }}%
\right) ^{2}}-1\right] \frac{\Omega _{2}}{V(M)^{\frac{2}{2^{\sharp }}}}
\end{equation}%
where $V(M)$ denotes the volume of $M$ and 
\begin{equation*}
\Omega _{1}=\left( \dfrac{2a}{8(n-2k)Q_{\tilde{g}}}\right) ^{\frac{2}{%
2^{\sharp }}}t_{0}^{2}.
\end{equation*}%
\begin{equation*}
\Omega _{2}=\left( \frac{2\left( a^{\frac{2}{2^{\sharp }}+1}\right) t_{0}^{2}%
}{(n-2k)Q_{\tilde{g}}}\right) ^{\frac{2}{2^{\sharp }}}t_{0}^{2}.
\end{equation*}%
Let 
\begin{equation*}
\lambda ^{\ast }=\min \left( \beta _{1},\beta _{2}\right) .
\end{equation*}%
By H\"{o}lder's inequality, we get:%
\begin{equation}
\left( \int\limits_{M}^{{}}\sqrt{A(x)}dv_{g}\right) ^{2}\leq I\left[ \Vert
\varepsilon +(t_{3}.\varphi )^{2}\Vert _{\frac{2^{\sharp }}{2}}\right] ^{%
\frac{2^{\sharp }}{2}}  \label{3.7}
\end{equation}%
where 
\begin{equation*}
I=\int\limits_{M}^{{}}\frac{A(x)}{\left( \varepsilon +(t_{3}.\varphi
)^{2}\right) ^{\frac{2^{\sharp }}{2}}}dv_{g}
\end{equation*}%
Independently, the Minkowski's inequality can be written 
\begin{equation*}
\Vert \varepsilon +(t_{3}.\varphi )^{2}\Vert _{\frac{2^{\sharp }}{2}}\leq
\Vert \varepsilon \Vert _{\frac{2^{\sharp }}{2}}+t_{3}^{2}\Vert \varphi
^{2}\Vert _{\frac{2^{\sharp }}{2}}
\end{equation*}%
consequently 
\begin{equation}
\left[ \Vert \varepsilon +(t_{3}.\varphi )^{2}\Vert _{\frac{2^{\sharp }}{2}}%
\right] ^{\frac{2^{\sharp }}{2}}\leq \left( \Vert \varepsilon \Vert _{\frac{%
2^{\sharp }}{2}}+t_{3}^{2}\Vert \varphi ^{2}\Vert _{\frac{2^{\sharp }}{2}%
}\right) ^{\frac{2^{\sharp }}{2}}  \label{3.8}
\end{equation}%
Notice that 
\begin{equation*}
\Vert \varepsilon \Vert _{\frac{2^{\sharp }}{2}}=\varepsilon .\left[ V(M)%
\right] ^{\frac{2}{2^{\sharp }}}
\end{equation*}%
and 
\begin{equation*}
\Vert \varphi ^{2}\Vert _{\frac{2^{\sharp }}{2}}=\Vert \varphi \Vert
_{2^{\sharp }}^{2}.
\end{equation*}%
From the conformal rule (\ref{0.0}) of the GJMS operator $P_{g}$ we have 
\begin{equation*}
P_{g}(\varphi )=\frac{n-2k}{2}Q_{\tilde{g}}.\varphi ^{2^{\sharp }-1}
\end{equation*}%
after multiplication by $\varphi $ and integration over the manifold $M$, we
get 
\begin{equation*}
\Vert \varphi \Vert ^{2}=\int_{M}\varphi P_{g}(\varphi )dv_{g}=\frac{n-2k}{2}%
\int_{M}Q_{\tilde{g}}.\varphi ^{2^{\sharp }}dv_{g}.
\end{equation*}%
Now, by the work done in \cite{3} and under the condition in (\ref{3.4}) we
can do a conformal change of the metric $g$ to a new metric $\tilde{g}$ such
that $Q_{\tilde{g}}$ is a constant which we suppose positive, hence 
\begin{equation*}
\Vert \varphi \Vert ^{2}=\frac{n-2k}{2}Q_{\tilde{g}}\Vert \varphi \Vert
_{2^{\sharp }}^{2^{\sharp }}
\end{equation*}%
since $\Vert \varphi \Vert =1$ we get 
\begin{equation}
\Vert \varphi \Vert _{2^{\sharp }}^{2}=\left( \dfrac{2}{(n-2k)Q_{\tilde{g}}}%
\right) ^{\frac{2}{2^{\sharp }}}  \label{3.9}
\end{equation}%
and therefore (\ref{3.8}) becomes 
\begin{equation*}
\left( \Vert \varepsilon +(t_{3}.\varphi )^{2}\Vert _{\frac{2^{\sharp }}{2}%
}\right) ^{\frac{2^{\sharp }}{2}}\leq \left( \varepsilon .\left[ V(M)\right]
^{\frac{2}{2^{\sharp }}}+t_{3}^{2}.\left( \dfrac{2}{(n-2k)Q_{\tilde{g}}}%
\right) ^{\frac{2}{2^{\sharp }}}\right) ^{\frac{2^{\sharp }}{2}}.
\end{equation*}%
\newline
Taking account of 
\begin{equation*}
t_{3}=\left( \frac{a}{8}\right) ^{\frac{1}{2^{\sharp }}}t_{0}
\end{equation*}%
(\ref{3.7}) is written as

\begin{equation*}
\left( \int\limits_{M}^{{}}\sqrt{A(x)}dv_{g}\right) ^{2}\leq I\left(
\varepsilon V(M)^{\frac{2}{2^{\sharp }}}+\left( \frac{a}{8}\right) ^{\frac{2%
}{2^{\sharp }}}t_{0}^{2}.\left( \dfrac{2}{(n-2k)Q_{\tilde{g}}}\right) ^{%
\frac{2}{2^{\sharp }}}\right) ^{\frac{2^{\sharp }}{2}}\text{.}
\end{equation*}%
And since $0<\varepsilon <\lambda ^{\ast }\leq \beta _{1}$, we get

\begin{eqnarray*}
\left( \int\limits_{M}^{{}}\sqrt{A(x)}dv_{g}\right) ^{2} &\leq &I.\left( %
\left[ \left( \frac{8}{8-a}\right) ^{\frac{2}{2^{\sharp }}}-1\right]
\Omega_1 +\Omega_1 \right) ^{\frac{2^{\sharp }}{2}} \\
&\leq &I\left( \frac{8}{8-a}\right) \Omega_1 ^{\frac{2^{\sharp }}{2}} \\
&\leq &I\left( \frac{8}{8-a}\right) \left( \left( \dfrac{2a}{8(n-2k)Q_{%
\tilde{g}}}\right) ^{\frac{2}{2^{\sharp }}}t_{0}^{2}\right) ^{\frac{%
2^{\sharp }}{2}}\text{.}
\end{eqnarray*}

Finally, we deduce 
\begin{equation*}
I\geq \left( \int\limits_{M}^{{}}\sqrt{A(x)}dv_{g}\right) ^{2}\left( \frac{%
8-a}{a}\right) \frac{(n-2k)Q_{\tilde{g}}}{2.t_{0}^{2^{\sharp }}}.
\end{equation*}
\end{proof}

Now we are able to prove the existence of a second solution to equation (\ref%
{0.1}), that is to say the proof of theorem \ref{theorem2}.

\begin{proof}
The proof will be done in four steps. \newline
\textit{1}$^{\text{st}}$\textit{\ step.} The functional $I_{\varepsilon }$
has a local lower bound. \newline
This consists to find a strictly positive real number $\lambda ^{\ast }$
such that $\forall \varepsilon \in \left] 0,\lambda ^{\ast }\right[ $ one
has the following inequality

\begin{equation*}
I_{\varepsilon }(t_{3}\varphi )>\Phi (t_{0})\quad \forall \varphi \in
C^{\infty }(M),\;\Vert \varphi \Vert =1,\text{ with }t_{3}<t_{1}.
\end{equation*}%
Indeed, according to Lemma \ref{lem3} inequality (\ref{3.2}) and inequality (%
\ref{2.2}); one has%
\begin{align}
I_{\varepsilon }(t_{3}\varphi )& =I^{(1)}(t_{3}\varphi
)+I^{(2)}(t_{3}\varphi )  \label{3.10} \\
& >\frac{a}{8}\Phi (t_{0})+\frac{1}{2^{\sharp }}\int_{M}\frac{A\left(
x\right) }{\left( \varepsilon +(t_{3}\varphi )^{2}\right) ^{2^{\flat }}}%
dv_{g}  \notag \\
& +\frac{1}{p-1}\int_{M}\frac{C(x)}{\left( \varepsilon +(t_{3}\varphi
)^{2}\right) ^{\frac{p-1}{2}}}dv_{g}
\end{align}

and as by assumption 
\begin{equation*}
\left( \int\limits_{M}^{{}}\sqrt{A(x)}dv_{g}\right) ^{2}\left( \frac{8-a}{a}%
\right) \frac{(n-2k)Q_{\tilde{g}}}{2.t_{0}^{2^{\sharp }}}>2^{\sharp }\frac{k%
}{n}t_{0}^{2}(1-\frac{a}{8})
\end{equation*}%
and 
\begin{equation*}
\lambda ^{\ast }=\min (\beta _{1},\beta _{2})
\end{equation*}%
knowing that 
\begin{equation*}
\Phi (t_{0})=\frac{k}{n}.t_{0}^{2}
\end{equation*}%
it follows by Lemma \ref{lem4} that, $\forall \varepsilon \in \left]
0,\lambda ^{\ast }\right[ $ 
\begin{equation}
\frac{1}{2^{\sharp }}\int\limits_{M}^{{}}\frac{A(x)}{\left( \varepsilon
+(t_{3}\varphi )^{2}\right) ^{\frac{2^{\sharp }}{2}}}dv_{g}>\left( 1-\frac{a%
}{8}\right) \Phi (t_{0}).  \label{3.11}
\end{equation}

Finally, by combination of (\ref{3.10}), (\ref{3.11}) and the fact that the
function $C>0$, we get

\begin{eqnarray*}
I_{\varepsilon }(t_{3}\varphi ) &>&\frac{a}{8}\Phi (t_{0})+\left( 1-\frac{a}{%
8}\right) \Phi (t_{0}) \\
&>&\Phi (t_{0})\text{.}
\end{eqnarray*}
Hence our result. \newline

\textit{2}$^{nd}$ \textit{step}. The infimum of the functional $%
I_{\varepsilon }$ is reached.

Denote by $\overline{B}(0,t_{1})=\left\{ u\in H_{k}^{2}(M):\left\Vert
u\right\Vert \leq t_{1}\right\} $ the closed ball centred at the origin $0$
of radius $t_{1}$ in $H_{k}^{2}(M)$. In this section we will show that $%
c_{\varepsilon }=\inf_{B(0,t_{1})}I_{\varepsilon }$\ \ ( $c_{\varepsilon
}<\Phi (t_{0})$ )\ is reached. By Ekeland's Lemma, there exists a sequence $%
\left( u_{m}\right) _{m\in N}$ in $B(0,t_{1})$ such that $I_{\varepsilon
}(u)\rightarrow c_{\varepsilon }=Inf_{B(0,t_{1})}I_{\varepsilon }$ and $%
DI_{\varepsilon }(u_{m})\rightarrow 0$ strongly in the dual space of $%
H_{k}^{2}(M)$. That is to say ($u_{m}$) is a Palais-Smale sequence, so by
the same arguments as in Theorem \ref{theorem2} and Theorem\ref{theorem1},
we get that equation (\ref{0.1}) has a smooth\ positive solution $v$. Since
the $\varepsilon $-approximating solutions are obtained as weak limit of
sequences of functions from $\overline{B}(0,t_{1})$, it follows by the weak
lower semi-continuity of the norm that these $\varepsilon $-approximating
solutions are in $\overline{B}(0,t_{1}).$ As in turn $v$ is obtained as a
limit of a sequence of $\varepsilon $-approximating solutions that $v\in 
\overline{B}(0,t_{1}).$

\textit{3}$^{rd}$\textit{-step}. The two solutions are distinct.

To show that the two solutions $u$ and $v$ are different, we will verify
that their respective energies are different.

Put $t_{4}=a^{\frac{1}{2^{\sharp }}}t_{0}$, it is clear that $t_{4}>t_{1}$ (
see the assumptions of Lemma \ref{lem1}) and since $\left\Vert v\right\Vert
\leq t_{1}$ then if $\left\Vert u\right\Vert \geq t_{4}$, $u\neq v$. So we
may suppose that $\left\Vert u\right\Vert <t_{4}$.

Imitating the computations made in the previous section and taking into
account that in this time we take $\epsilon \leq \beta _{2}$, it is not hard
to get 
\begin{equation*}
\int_{M}\frac{A(x)}{\left( \epsilon +\left( t_{4}u\right) ^{2}\right) ^{%
\frac{2^{\sharp }}{2}}}dv_{g}\geq \frac{\left( n-2k\right) Q_{\tilde{g}}}{%
2.a.t_{0}^{2^{\sharp }+2}}\left( \int_{M}\sqrt{A(x)}dv_{g}\right) ^{2}.
\end{equation*}%
Now, since it is easy to see that%
\begin{equation*}
\frac{1}{2^{\sharp }}\int_{M}\frac{A(x)}{u^{2^{\sharp }}}dv_{g}\geq \frac{%
t_{4}^{2^{\sharp }}}{2^{\sharp }}\int_{M}\frac{A(x)}{\left( \epsilon +\left(
t_{4}u\right) ^{2}\right) ^{\frac{2^{\sharp }}{2}}}dv_{g}
\end{equation*}%
and by the hypothesis (\ref{0.8}) of Theorem \ref{theorem2}, we infer that 
\begin{equation*}
\frac{1}{2^{\sharp }}\int_{M}\frac{A(x)}{u^{2^{\sharp }}}dv_{g}\geq \frac{%
a.t_{0}^{2^{\sharp }-2}}{8}\Phi (t_{0})
\end{equation*}%
where $\Phi (t_{0})=\frac{k}{n}t_{0}^{2}$.

Now, we will estimate the energy of the solution $u$

\begin{equation*}
I(u)=\frac{1}{2}\left\Vert u\right\Vert ^{2}-\frac{1}{2^{\sharp }}%
\int_{M}B(x)u^{2^{\sharp }}dv_{g}+\frac{1}{2^{\sharp }}\int_{M}\frac{A(x)}{%
u^{2^{\sharp }}}dv_{g}+\frac{1}{\left( p-1\right) }\int_{M}\frac{C(x)}{%
u^{p-1}}dv_{g}
\end{equation*}%
and since 
\begin{equation*}
\left\Vert u\right\Vert ^{2}=\int_{M}B(x)u^{2^{\sharp }}dv_{g}+\int_{M}\frac{%
A(x)}{u^{2^{\sharp }}}dv_{g}+\int_{M}\frac{C(x)}{u^{p-1}}dv_{g}
\end{equation*}%
we deduce that 
\begin{eqnarray*}
I(u) &=&\frac{k}{n}\left\Vert u\right\Vert ^{2}+\left( \frac{1}{2^{\sharp }}+%
\frac{1}{2^{\sharp }}\right) \int_{M}\frac{A(x)}{u^{2^{\sharp }}}%
dv_{g}+\left( \frac{1}{2^{\sharp }}+\frac{1}{\left( p-1\right) }\right)
\int_{M}\frac{C(x)}{u^{p-1}}dv_{g} \\
&\geq &\frac{2}{2^{\sharp }}\int_{M}\frac{A(x)}{u^{2^{\sharp }}}dv_{g} \\
&\geq &2\frac{a.t_{0}^{2^{\sharp }-2}}{8}\Phi (t_{0}).
\end{eqnarray*}%
and taking into account of the value $a=\frac{1}{\left( 2(n-k)\right)
^{2^{\sharp }}}$ and the fact that 
\begin{equation*}
0<\left( S\underset{x\in M}{\max }B(x)\right) <\frac{a}{4}
\end{equation*}%
we infer%
\begin{equation*}
I(u)>\Phi \left( t_{0}\right) \text{.}
\end{equation*}%
Since the energy $I(v)$ of the solution $v$ is less than $\Phi \left(
t_{0}\right) $ we conclude that $u\neq v$.

\textit{4}$^{\text{th}}$\textit{-step}. The conditions of the theorem
intersect.

Indeed, let us rewrite the condition (\ref{0.8}) of Theorem \ref{theorem2} 
\begin{equation}
\frac{1}{2^{\sharp }}\left( \int\limits_{M}^{{}}\sqrt{A(x)}dv_{g}\right)
^{2}>\frac{k}{8n}t_{0}^{2+2^{\sharp }}\frac{2a}{(n-2k)Q_{\tilde{g}}}.
\label{3.12}
\end{equation}%
By H\"{o}lder inequality, we get%
\begin{eqnarray}
\int\limits_{M}^{{}}\sqrt{A(x)}dv_{g} &=&\int_{M}\sqrt{\dfrac{A(x)}{\varphi
^{2^{\sharp }}}}\varphi ^{\frac{2^{\sharp }}{2}}dv_{g}  \notag \\
&\leq &\left( \int_{M}\dfrac{A(x)}{\varphi ^{2^{\sharp }}}dv_{g}\right) ^{%
\frac{1}{2}}\left( \int_{M}\varphi ^{2^{\sharp }}dv_{g}\right) ^{\frac{1}{2}}
\notag \\
&\leq &\Vert \varphi \Vert _{2^{\sharp }}^{\frac{2^{\sharp }}{2}}\left(
\int_{M}\dfrac{A(x)}{\varphi ^{2^{\sharp }}}dv_{g}\right) ^{\frac{1}{2}}
\end{eqnarray}%
From equality (2.10) in the proof of Lemma 2.3 and the fact that $\Vert
\varphi \Vert =1$, it comes that 
\begin{equation*}
\frac{1}{2^{\sharp }}\left( \int\limits_{M}^{{}}\sqrt{A(x)}dv_{g}\right)
^{2}\leq \frac{1}{2^{\sharp }}\left( \dfrac{2}{(n-2k)Q_{\tilde{g}}}\right)
\int_{M}\dfrac{A(x)}{\varphi ^{2^{\sharp }}}dv_{g}
\end{equation*}%
with the condition (\ref{0.6}) of Theorem \ref{theorem2}, we obtain 
\begin{equation*}
\frac{1}{2^{\sharp }}\left( \int\limits_{M}^{{}}\sqrt{A(x)}dv_{g}\right)
^{2}\leq \left( \dfrac{2}{(n-2k)Q_{\tilde{g}}}\right) C(n,p,k)\left( S%
\underset{x\in M}{\max }B(x)\right) ^{\frac{2+2^{\sharp }}{2-2^{\sharp }}}%
\text{.}
\end{equation*}%
Since $3<p<2^{\sharp }+1$, we may take $C(n,p,k)=C_{1}(n,p,k)$ and we have
also $\theta ^{2^{\sharp }}\leq \theta ^{p-1}$ and therefore 
\begin{equation}
\frac{1}{2^{\sharp }}\left( \int\limits_{M}^{{}}\sqrt{A(x)}dv_{g}\right)
^{2}\leq \left( \dfrac{2}{(n-2k)Q_{\tilde{g}}}\right) \frac{2k-1}{4n}\theta
^{2^{\sharp }}t_{0}^{2+2^{\sharp }}.  \label{3.13}
\end{equation}%
By combining conditions (\ref{3.12}) and (\ref{3.13}), we get the following
double inequality 
\begin{equation*}
\frac{k}{8n}t_{0}^{2+2^{\sharp }}\frac{2a}{(n-2k)Q_{\tilde{g}}}<\frac{1}{%
2^{\sharp }}\left( \int\limits_{M}^{{}}\sqrt{A(x)}dv_{g}\right) ^{2}\leq
\left( \dfrac{2}{(n-2k)Q_{\tilde{g}}}\right) \frac{2k-1}{4n}\theta
^{2^{\sharp }}t_{0}^{2+2^{\sharp }}.
\end{equation*}%
Which in turn is equivalent to 
\begin{equation*}
k\frac{a}{2}<\frac{2n(n-2k)Q_{\tilde{g}}t_{0}^{-2-2^{\sharp }}}{2^{\sharp }}%
\left( \int\limits_{M}^{{}}\sqrt{A(x)}dv_{g}\right) ^{2}\leq \left(
2k-1\right) \theta ^{2^{\sharp }}
\end{equation*}%
and since $\frac{a}{2}<\theta ^{2^{\sharp }}$ we get%
\begin{equation*}
1<\frac{4n(n-2k)Q_{\tilde{g}}t_{0}^{-2-2^{\sharp }}}{2^{\sharp }a.k}\left(
\int\limits_{M}^{{}}\sqrt{A(x)}dv_{g}\right) ^{2}\leq \left( 2-\frac{1}{k}%
\right)
\end{equation*}%
with $k\geq 1$. The smooth functions\ $A$ that fulfill the assumptions (0.6)
and (0.8) of theorem \ref{theorem2} are those that satisfy the following
double inequality 
\begin{equation*}
C<A\leq \left( 2-\frac{1}{k}\right) C
\end{equation*}%
where \ $C=\frac{2^{\sharp }a.k}{4n(n-2k)Q_{\tilde{g}}t_{0}^{-2-2^{\sharp
}}V(M)^{2}}$ and $V(M)$ is the volume of $M$.
\end{proof}

\section{Nonexistence of solution}

In this section we will be placed in a closed ball $\overline{B}(0,R)$ of $%
H_{k}^{2}(M)$ centered at the origin $0$ and of radius $R>0$, we prove that
under some condition (inequality \ref{0.10} of Theorem \ref{theorem3}) that
the equation (\ref{0.1}) has no solution i.e. we will give the proof of
Theorem \ref{theorem3}.

\begin{proof}
( Proof of \ Theorem \ref{theorem3}) Suppose that there exists a smooth
positive solution $u\in H_{k}^{2}(M)$ such that $\Vert u\Vert
_{H_{k}^{2}(M)}\leq R$. By multiplying both sides of equation (\ref{2.1}) by 
$u$ end integrating over $M$, we get 
\begin{equation*}
\int_{M}uP_{g}(u)dv_{g}=\int_{M}\left( B\left( x\right) u^{2^{\sharp }}+%
\frac{A\left( x\right) }{u^{2^{\sharp }}}+\frac{C\left( x\right) }{u^{p-1}}%
\right) dv_{g}.
\end{equation*}%
And since $\Vert u\Vert _{p_{g}}=\sqrt{\int\limits_{M}uP_{g}(u)dv_{g}}$ is a
norm equivalent to $\Vert u\Vert _{H_{k}^{2}(M)}$, there exists a constant $%
S>0$ such that 
\begin{equation*}
\Vert u\Vert \leq S\Vert u\Vert _{H_{k}^{2}(M)}\text{.}
\end{equation*}%
Then it follows that 
\begin{equation}
\int_{M}\left( B\left( x\right) u^{2^{\sharp }}+\frac{A\left( x\right) }{%
u^{2^{\sharp }}}+\frac{C\left( x\right) }{u^{p-1}}\right) dv_{g}\leq \left(
SR\right) ^{2}\text{.}  \label{4.1}
\end{equation}%
Moreover H\"{o}lder's inequality allows us to write 
\begin{equation}
\int_{M}\sqrt{B\left( x\right) C\left( x\right) }dv_{g}\leq \left(
\int\limits_{M}\frac{C(x)}{u^{p-1}}dv_{g}\right) ^{\frac{1}{2}}\left(
\int\limits_{M}B(x)u^{p-1}dv_{g}\right) ^{\frac{1}{2}}\text{.}
\end{equation}%
By applying again the H\"{o}lder's inequality, one finds 
\begin{eqnarray}
\int_{M}B\left( x\right) u^{p-1}dv_{g} &=&\int_{M}B\left( x\right) ^{1-\frac{%
p-1}{2^{\sharp }}}\left( B\left( x\right) ^{\frac{1}{2^{\sharp }}}u\right)
^{p-1}dv_{g}  \notag \\
&\leq &\left( \int_{M}B\left( x\right) dv_{g}\right) ^{\frac{2^{\sharp }-p+1%
}{2^{\sharp }}}\left( \int_{M}\left[ \left( B\left( x\right) ^{\frac{1}{%
2^{\sharp }}}u\right) ^{p-1}\right] ^{\frac{2^{\sharp }}{p-1}}dv_{g}\right)
^{\frac{p-1}{2^{\sharp }}}  \notag \\
&\leq &\left( \int_{M}B\left( x\right) dv_{g}\right) ^{\frac{2^{\sharp }-p+1%
}{2^{\sharp }}}\left( \int_{M}B\left( x\right) u^{2^{\sharp }}dv_{g}\right)
^{\frac{p-1}{2^{\sharp }}}.
\end{eqnarray}%
So 
\begin{equation*}
\left( \int_{M}\sqrt{B\left( x\right) C\left( x\right) }dv_{g}\right)
^{2}\leq \left( \int_{M}B\left( x\right) dv_{g}\right) ^{\frac{2^{\sharp
}-p+1}{2^{\sharp }}}.\left( \int_{M}B\left( x\right) u^{2^{\sharp
}}dv_{g}\right) ^{\frac{p-1}{2^{\sharp }}}\int_{M}\frac{C(x)}{u^{p-1}}dv_{g}%
\text{.}
\end{equation*}%
Letting 
\begin{equation*}
D=\left( \int_{M}\sqrt{B\left( x\right) C\left( x\right) }dv_{g}\right)
^{2}\left( \int_{M}B\left( x\right) dv_{g}\right) ^{\frac{p-2^{\sharp }-1}{%
2^{\sharp }}}
\end{equation*}%
it comes 
\begin{equation*}
\int_{M}\frac{C(x)}{u^{p-1}}dv_{g}\geq D\left( \int_{M}B\left( x\right)
u^{2^{\sharp }}dv_{g}\right) ^{\frac{1-p}{2^{\sharp }}}.
\end{equation*}%
and therefore (\ref{4.1}) becomes 
\begin{equation*}
(SR)^{2}\geq \int_{M}B\left( x\right) u^{2^{\sharp }}dv_{g}+\int_{M}\frac{%
A\left( x\right) }{u^{2^{\sharp }}}dv_{g}+D\left( \int_{M}B\left( x\right)
u^{2^{\sharp }}dv_{g}\right) ^{\frac{1-p}{2^{\sharp }}}\text{.}
\end{equation*}%
Since $A$ is of positive values, then 
\begin{equation*}
\int_{M}\frac{A\left( x\right) }{u^{2^{\sharp }}}dv_{g}\geq 0
\end{equation*}%
so it comes that 
\begin{equation*}
(SR)^{2}\geq \int_{M}B\left( x\right) u^{2^{\sharp }}dv_{g}+D\left(
\int_{M}B\left( x\right) u^{2^{\sharp }}dv_{g}\right) ^{\frac{1-p}{2^{\sharp
}}}
\end{equation*}%
and if we set 
\begin{equation*}
t=\int_{M}B\left( x\right) u^{2^{\sharp }}dv_{g}
\end{equation*}%
we obtain 
\begin{equation*}
(RS)^{2}\geq f(t)
\end{equation*}%
where 
\begin{equation*}
f(t)=t+Dt^{\frac{1-p}{2^{\sharp }}}.
\end{equation*}%
$f$ \ has a minimum at 
\begin{equation*}
t_{0}=\left( \frac{p-1}{2^{\sharp }}D\right) ^{\frac{2^{\sharp }}{2^{\sharp
}+p-1}}
\end{equation*}%
and consequently 
\begin{equation*}
\forall t>0\text{, }f(t)\geq \min_{t>0}f(t)=f(t_{0})=\frac{2^{\sharp }+p-1}{%
p-1}\left( \frac{p-1}{2^{\sharp }}D\right) ^{\frac{2^{\sharp }}{2^{\sharp
}+p-1}}\text{.}
\end{equation*}%
Finally, replacing $D$ by its value, we obtain 
\begin{equation*}
(RS)^{2}\geq \frac{2^{\sharp }+p-1}{p-1}\left( \frac{p-1}{2^{\sharp }}%
\right) ^{\frac{2^{\sharp }}{2^{\sharp }+p-1}}\left( \int_{M}\sqrt{B\left(
x\right) C\left( x\right) }dv_{g}\right) ^{2.\frac{2^{\sharp }}{2^{\sharp
}+p-1}}\left( \int_{M}B\left( x\right) dv_{g}\right) ^{\frac{p-1-2^{\sharp }%
}{2^{\sharp }+p-1}}\text{.}
\end{equation*}%
and if we set 
\begin{equation*}
C(n,p,k)=\frac{2^{\sharp }+p-1}{p-1}\left( \frac{p-1}{2^{\sharp }}\right) ^{%
\frac{2^{\sharp }}{2^{\sharp }+p-1}}
\end{equation*}%
then it comes that 
\begin{equation*}
(RS)^{2}\geq C(n,p,k)\left( \frac{\int_{M}\sqrt{B\left( x\right) C\left(
x\right) }dv_{g}}{\int_{M}B\left( x\right) dv_{g}}\right) ^{2.\frac{%
2^{\sharp }}{2^{\sharp }+p-1}}\int_{M}B\left( x\right) dv_{g}.
\end{equation*}
\end{proof}

\end{document}